\tikzset{middlearrow/.style={
        decoration={markings,
            mark= at position 0.5 with {\arrow{#1}} ,
        },
        postaction={decorate}
    }
}
\tikzset{
Nedge/.append style={->,>=stealth', shorten <=6pt, shorten >=6pt},
every loop/.append style={min distance=12mm,shorten <=6pt, shorten >=6pt}
}
\newcommand{\Qbb}{\mathbb{Q}}
\newcommand{\Rbb}{\mathbb{R}}
\newcommand{\Zbb}{\mathbb{Z}}
\newcommand{\Acal}{\mathcal{A}}
\newcommand{\Fcal}{\mathcal{F}}
\newcommand{\Jcal}{\mathcal{J}}
\newcommand{\Scal}{\mathcal{S}}
\newcommand{\Pcal}{\mathcal{P}}
\newcommand{\Qcal}{\mathcal{Q}}
\newcommand{\Tcal}{\mathcal{T}}
\newcommand{\Gcal}{\mathcal{G}}
\newcommand{\Wcal}{\mathcal{W}}
\newcommand{\Proj}{\mathrm{Proj}}
\newcommand{\Aff}{\mathrm{Aff}}
\newcommand{\abf}{\mathbf{a}}
\newcommand{\bbf}{\mathbf{b}}
\newcommand{\Lbf}{\mathbf{L}}
\newcommand{\Nbf}{\mathbf{N}}
\newcommand{\Fbf}{\mathbf{F}}
\newcommand{\Ibf}{\mathbf{I}}
\newcommand{\Cbf}{\mathbf{C}}
\newcommand{\Ebf}{\mathbf{E}}
\newcommand{\Dbf}{\mathbf{D}}
\newcommand{\cbf}{\mathbf{c}}
\newcommand{\wbf}{\mathbf{w}}
\newcommand{\zbf}{\mathbf{z}}
\newcommand{\pp}{_{>0}}
\newcommand{\m}{^{-1}}
\newcommand{\bigcupdot}{\mathop{\dot\bigcup}}
\newcommand{\cupdot}{\mathbin{\dot\cup}}
\newcommand{\abs}[1]{\lvert#1\rvert}
\newcommand{\newword}[1]{\emph{#1}}
\newcommand{\floor}[1]{\lfloor #1 \rfloor}
\newcommand{\ceil}[1]{\lceil #1 \rceil}
\newcommand{\set}[1]{\{ #1 \}}
\newcommand{\ppmatrix}[4]{\bigl(\begin{smallmatrix}#1&#2\\#3&#4\end{smallmatrix}\bigr)}
\newcommand{\bbmatrix}[4]{\bigl[\begin{smallmatrix}#1&#2\\#3&#4\end{smallmatrix}\bigr]}
\newcommand{\transpose}{^\mathrm{T}}
\newcommand{\du}[1]{#1^\sharp}
\DeclareMathOperator{\PP}{P}
\DeclareMathOperator{\PSL}{PSL}
\DeclareMathOperator{\jump}{jump}
\theoremstyle{plain}
\newtheorem{theorem}{Theorem}[section]
\newtheorem{lemma}[theorem]{Lemma}
\newtheorem{corollary}[theorem]{Corollary}
\theoremstyle{definition}
\newtheorem{definition}[theorem]{Definition}
\newtheorem{remark}[theorem]{Remark}
\newtheorem{example}[theorem]{Example}
\numberwithin{equation}{section}
\begin{document}

\bibliographystyle{plain}

\sloppy

\title[Purely periodic continued fractions]{Purely periodic continued fractions\\
and graph-directed iterated function systems}

\author[G.~Panti]{Giovanni Panti}
\address{Department of Mathematics, Computer Science and Physics\\
University of Udine\\
via delle Scienze 206\\
33100 Udine, Italy}
\email{giovanni.panti@uniud.it}

\begin{abstract}
We describe Gauss-type maps as geometric realizations of certain  codes in the monoid of nonnegative matrices in the extended modular group. 
Each such code, together with an appropriate choice of unimodular intervals in $\PP^1\Rbb$, determines a dual pair of graph-directed iterated function systems, whose attractors contain intervals and constitute the domains of a dual pair of Gauss-type maps.
Our framework covers many continued fraction
algorithms (such as Farey fractions, Ceiling, Even and Odd, Nearest Integer, $\ldots$) and provides explicit dual algorithms and characterizations of those quadratic irrationals having a purely periodic expansion.
\end{abstract}

\thanks{\emph{2020 Math.~Subj.~Class.}: 11A55, 37E15}
\thanks{This is the author-created, un-copyedited final version of an article in \emph{The Ramanujan J.}, 2024. The official version is available Open Access at \texttt{https://link.springer.com/article/10.1007/s11139-024-00904-8}.}

\maketitle

\section{Introduction}\label{ref1}

In 1828 \'Evariste Galois, then a 17-year-old student at the Lyc\'ee Louis-le-Grand in Paris, published the following result~\cite{galois28}.

\begin{theorem}
Let\label{ref5} $\omega\in[0,1]$ be a quadratic irrational. Then $\omega$ has a purely periodic expansion as an ordinary continued fraction, $\omega=[0,\overline{a_1,\ldots,a_{p-1}}]$, if and only if its algebraic conjugate $\omega'$ is less than $-1$. If this happens, then we have $-1/\omega'=[0,\overline{a_{p-1},\ldots,a_1}]$.
\end{theorem}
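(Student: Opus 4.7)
The plan is to reformulate purely periodicity as a fixed-point problem for a specific element of $\mathrm{GL}_2(\Zbb)$, and then extract both the sign condition on $\omega'$ and the reversal formula from the second fixed point of that element. Writing the Gauss map as $T(x)=1/x-\floor{1/x}$ on $(0,1)$, its inverse branches are the symmetric matrices $M_a=\ppmatrix{0}{1}{1}{a}$ for $a\in\Zbb\pp$, acting by $y\mapsto 1/(a+y)$. The identity $\omega=[0,\overline{a_1,\ldots,a_{p-1}}]$ is equivalent to $M(\omega)=\omega$ with $M=M_{a_1}M_{a_2}\cdots M_{a_{p-1}}$; since $\omega$ is irrational, the integer matrix $M$ is hyperbolic, with exactly two real fixed points, namely $\omega$ itself and its Galois conjugate $\omega'$.

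The necessity direction and the reversal formula come out together via a transpose identity. Each $M_a$ is symmetric, so $M^\transpose=M_{a_{p-1}}\cdots M_{a_1}$ corresponds to reading the partial quotients in reverse. Writing $M=\ppmatrix{A}{B}{C}{D}$, the common fixed-point equation for $\omega,\omega'$ is $Cx^2+(D-A)x-B=0$; substituting $x=-1/y$ turns this into the fixed-point equation of $M^\transpose$, so the fixed points of $M^\transpose$ are exactly $-1/\omega$ and $-1/\omega'$. On the other hand, $M^\transpose$ is itself a product of inverse Gauss branches, so iterating it contracts $[0,1]$ to its unique attracting fixed point in $(0,1)$, which must equal the purely periodic value $[0,\overline{a_{p-1},\ldots,a_1}]$. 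Since $\omega\in(0,1)$ forces $-1/\omega<-1$, the fixed point in $(0,1)$ has to be $-1/\omega'$, and one reads off at once both $\omega'<-1$ and the stated reversal formula.

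For the converse I would invoke the classical reduction-theory argument. Call $\omega\in(0,1)$ \emph{reduced} if it is a quadratic irrational with $\omega'<-1$, and let $\mathcal{R}$ denote the set of all such. One checks directly that $\mathcal{R}$ is $T$-invariant: with $a=\floor{1/\omega}\ge 1$, one has $T(\omega)'=1/\omega'-a\in(-a-1,-a)\subset(-\infty,-1)$, while $T(\omega)\in(0,1)$ by construction. The same inequality run in reverse shows that $T$ is injective on $\mathcal{R}$: the unique reduced preimage of $\omega_1\in\mathcal{R}$ is $1/(a+\omega_1)$ with $a=\floor{\abs{\omega_1'}}$. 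The discriminant of the minimal polynomial is preserved under $T$, and the reduced irrationals of a given discriminant form a finite set (the conditions $0<\omega<1$ and $\omega'<-1$ translate into elementary bounds on the coefficients of the minimal polynomial). Hence the forward orbit of $\omega\in\mathcal{R}$ under $T$ is eventually periodic inside a finite set on which $T$ is injective, so it is actually purely periodic.

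The step requiring real care is the transpose/reversal identification: one must verify that the fixed point of $M^\transpose$ landing in $(0,1)$ is $-1/\omega'$ rather than $-1/\omega$, and that this fixed point is the attracting one for the contraction $M^\transpose\colon[0,1]\to[0,1]$. Once this bookkeeping is in place the reversal identity falls out at no additional cost, and the converse direction follows the standard finiteness-plus-injectivity template.
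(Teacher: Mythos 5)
Your proof is correct, but it takes a genuinely different route from the paper. You give the classical reduction-theory argument: pure periodicity is recast as the fixed-point condition $M(\omega)=\omega$ for the hyperbolic word $M=M_{a_1}\cdots M_{a_{p-1}}$ in the symmetric inverse Gauss branches $M_a=\ppmatrix{0}{1}{1}{a}$; the transpose identity $M^\transpose=M_{a_{p-1}}\cdots M_{a_1}$, combined with the observation that $x\mapsto -1/x$ interchanges the fixed-point quadratics of $M$ and $M^\transpose$, then yields the time-reversal formula and the inequality $\omega'<-1$ in one stroke; and the converse is the familiar argument that $T$ restricts to an injection of the finite set of reduced surds of a fixed discriminant, forcing the eventually periodic $T$-orbit to be purely periodic. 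The paper, in contrast, never proves Theorem~\ref{ref5} directly: it is obtained at the end of \S\ref{ref26} as the leading special case of Theorems~\ref{ref19} and~\ref{ref31}, the ``slow'' and ``jump-accelerated'' Galois theorems, whose proofs rest on the purely combinatorial Theorem~\ref{ref17} about fibers of the coding map $\pi$ for a graph-directed IFS over the multimonoid $\widetilde\Sigma$. Your argument exploits the accident that on $[0,1]$ the inverse branches are symmetric matrices, so that transposition both reverses the word and implements the conjugation $x\mapsto-1/x$; that accident is exactly what fails for the Odd, Nearest Integer, and $\alpha$-continued-fraction maps, which is why the paper builds the heavier apparatus of codes and dual g-d IFS. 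The trade-off is the expected one: your proof is shorter, self-contained and elementary, but married to the Farey/Gauss picture, whereas the paper's route is more laborious but delivers Corollary~\ref{ref4} uniformly for all five algorithms, together with their explicit dual maps and the jump-acceleration duality of Theorem~\ref{ref31}.
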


In terms of dynamical systems, this amounts to a characterization of the set of numbers which have a purely periodic orbit under the Gauss map $F(x)=x\m-\floor{x\m}$, together with the fact that $F$ is selfdual up to a conjugation.
Replacing~$F$ with other maps piecewise-defined on appropriate intervals in $\PP^1\Rbb$ via matrices in the extended modular group $\PSL^\pm_2\Zbb$, we obtain more general Gauss-type maps. All of them have as eventually periodic points the set of quadratic irrationals in the domain of definition, but different maps on the same domain may have different sets of purely periodic points.

In this paper we extend the formalism introduced in~\cite{panti22} in two directions, firstly  by admitting finite unions of unimodular intervals (that is, images of $[0,\infty]$ by elements of the modular group) as domain of definition, and secondly by 
explicitly treating the accelerations of the resulting ``slow'' maps. In particular, we cover maps with not necessarily full branches, such as the ones related to Odd and Nearest Integer continued fractions, as well as those related to certain $\alpha$- and $(a,b)$-continued fractions (see~\cite{rieger79}, \cite{nakada81}, \cite{tanaka-ito81}, \cite{kraaikamp-schmidt-steiner12}, \cite{katok-ugarcovici12}, \cite{carminati-tiozzo13} and references therein). For simplicity's sake we restrict ourselves to maps with branches in the extended modular group; the extension to Hecke groups as treated in~\cite{panti22} is straightforward, but
burdens the notation and is anyhow pointless with an aim of characterizing purely periodic orbits, since even the problem of characterizing eventually periodic ones is, to the best of our knowledge, still wide open.
Referring to~\S\ref{ref2} for formal definitions, a simplified version of our main results, Theorems~\ref{ref19} and~\ref{ref31}, reads as follows.

\begin{theorem}
Let\label{ref3} $\Acal$ be an abstract continued fraction, and let be given finitely many unimodular intervals in $\PP^1\Rbb$ that constitute a geometric realization of $\Acal$. Then these data determine a Gauss-type multivalued map $F$ and a dual one $\du{F}$ on appropriate compact subsets $H$ and $K$ of $\PP^1\Rbb$. A quadratic irrational $\omega\in H$ has a purely periodic $F$-orbit if and only if its Galois conjugate $\omega'$ belongs to $K$; if this happens, then the $F$-orbit of $\omega$ and the $\du{F}$-orbit of $\omega'$ are ordinary single-valued ones, and correspond to each other via time reversal and conjugation.
This duality is preserved by passing to a jump acceleration $F_{\jump}$ of $F$, which then corresponds to the first-return map $\du{F}_R$ of $\du{F}$ on an appropriate subset $R$ of $K$.
\end{theorem}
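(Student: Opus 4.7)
The plan is to split the statement into its constituent claims --- existence of $H$, $K$, $F$, and $\du{F}$; the Galois-type equivalence; the orbit correspondence; and the jump acceleration --- and to reduce each to a combinatorial identity on the coding monoid of $\Acal$. First I would unpack the formal definitions promised in \S\ref{ref2}: the abstract continued fraction $\Acal$ should furnish a directed graph $\Gcal$ with edges labeled by matrices $A_e \in \PSL^\pm_2\Zbb$, together with a dual labeling $A_e^\sharp$. The geometric realization assigns unimodular intervals to the vertices of $\Gcal$, and $H$, $K$ arise as the attractors of the forward and dual graph-directed iterated function systems formed from these matrices. The maps $F$ and $\du{F}$ are then defined as the piecewise-M\"obius maps whose inverse branches are the generators of these GIFS, so that $F$-orbits of points in $H$ are in bijection with infinite admissible walks in $\Gcal$.

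For the main equivalence, I would argue algebraically: a purely periodic $F$-orbit of $\omega$ corresponds to a closed admissible walk $e_1,\ldots,e_p$ with $\omega$ a fixed point of $M := A_{e_1}\cdots A_{e_p}$. Since $\omega$ is a quadratic irrational, $M$ is hyperbolic, and its second fixed point can be identified --- modulo the standard involution of $\PP^1\Rbb$ embedded in the sharp duality --- with the Galois conjugate $\omega'$. The sharp involution sends $M$ to $M^\sharp = A_{e_p}^\sharp\cdots A_{e_1}^\sharp$, and the fixed points of this dual matrix then furnish the $\du{F}$-orbit of $\omega'$ traversed in reverse. The forward direction of the equivalence, together with the orbit correspondence, thus reduces to the claim that admissibility of the reversed walk in the dual GIFS automatically places $\omega'$ inside $K$, which the graph-directed formalism should make immediate by construction.

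For the converse, I would feed the (a priori only eventually periodic) $\du{F}$-orbit of $\omega'$ through the sharp involution to produce a candidate coding of $\omega$, then exploit the hypothesis $\omega \in H$ to force this coding to be purely periodic from time zero. The jump acceleration is then a combinatorial rewriting of $\Gcal$ that contracts certain finite patterns of consecutive edges into single \emph{jump} edges; dually this deletes internal vertices from the walk, which is precisely the first-return construction on the corresponding subset $R \subset K$. The identity ``time reverse of reduced equals reduce of time reversed'' is a formal property of the monoid code, and suffices to transport the orbit correspondence from $(F,\du{F})$ to $(F_{\jump},\du{F}_R)$.

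The main obstacle I anticipate is the converse direction in the presence of multivaluedness and not-necessarily-full branches. In the classical Gauss setting each point admits a unique applicable forward symbol, so a bi-infinite admissible coding of $\omega$ is automatic once one has a coding of $\omega'$; here, by contrast, several $F$-branches may apply at a single point, and one must match the branch dictated by the reversed dual code with an actually occurring forward branch of $F$. Making this matching canonical --- or showing that the choice does not affect the resulting orbit --- will require carefully exploiting the structure of the geometric realization and of the attractor $H$, and this is presumably where most of the technical work in Theorems~\ref{ref19} and~\ref{ref31} is concentrated.
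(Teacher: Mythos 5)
Your setup and the forward direction are broadly right: a purely periodic $F$-orbit of $\omega$ corresponds to a closed admissible path $a_0\ldots a_{p-1}$ with $\omega$ fixed by $B_{a_0\ldots a_{p-1}}$, and conjugating the fixed-point identity and passing through $\sharp$ places $\omega'$ in $K$. (One small inversion: the paper deduces that $\omega$ is a quadratic irrational \emph{from} the fact that $B_{a_0\ldots a_{p-1}}$ is not the identity, not the other way around, which matters because in the full Theorem~\ref{ref19} quadratic irrationality of $\omega$ is a conclusion, not a hypothesis.)

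The genuine gap is in the converse. You propose to take the a priori only eventually periodic $\du{F}$-coding $c_0\ldots c_{r-1}(d_0\ldots d_{q-1})^\omega$ of $\omega'$, reverse the periodic tail through $\sharp$, and declare the result a coding of $\omega$. What this actually produces is a purely periodic point $\tilde\omega\in H$ fixed by $B_{d_{q-1}\ldots d_0}$, whose Galois conjugate $\tilde\omega'$ is the point coded by $(d_0\ldots d_{q-1})^\omega$. But there is no step showing $\omega=\tilde\omega$: that is exactly the assertion that the pre-period $c_0\ldots c_{r-1}$ is empty, i.e.\ that $\omega'$ is not merely eventually periodic but purely periodic. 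You write that one should ``exploit the hypothesis $\omega\in H$ to force this coding to be purely periodic from time zero,'' but no mechanism is offered, and none is visible from the ingredients you have in hand. The branch-matching worry you flag as the ``main obstacle'' is a subsidiary issue compared with this one.

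The paper closes the gap by an entirely different route, a counting argument. Fixing the discriminant $D$ of $\omega_0$, the set $Q_i(D)\subset H_i\times K_i$ of Galois-conjugate pairs of discriminant $D$ is finite (since $H_i\subseteq I_i[0,\infty]$ and $K_i\subseteq I_i[\infty,0]$ force $\omega\omega'<0$ after normalization, bounding the coefficients of the primitive polynomial). Theorem~\ref{ref17}(1) then uniformly bounds the $\pi$-fibers, so the lifted set $\Qcal_i(D)$ of symbolic pairs $(\abf,\bbf)$ is also finite; the paired shift $\Scal(\abf,\bbf)=(S\abf,\,a_0\bbf)$ is a bijection of $\bigcup_i\Qcal_i(D)$, and a bijection of a finite set has only periodic orbits, so $\abf$ and $\bbf$ are purely periodic from time zero. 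This sidesteps the synchronization problem entirely. Likewise, uniqueness of the $F$-orbit through $\omega$ (your multivaluedness concern) is not argued geometrically but follows from Theorem~\ref{ref17}(2): a $\pi$-fiber containing a purely periodic sequence is a singleton, a purely combinatorial fact about the code. You should absorb Theorem~\ref{ref17} as the real engine here; it also drives the jump/first-return duality in Theorem~\ref{ref31}, where your high-level description of ``reduce commutes with reverse'' is on the right track but, again, the no-spurious-orbit statements all funnel back through~\ref{ref17}(2).
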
 

As simple examples we consider the continued fractions mentioned in the abstract, whose corresponding Gauss-type maps are as follows.
\begin{enumerate}
\item The Farey map~\cite{ito89} \cite[\S8]{isola11} on $[0,1]$ is given by
\[
x\mapsto\min(x(1-x)\m,(1-x)x\m).
\]
\item The Ceiling map~\cite[\S V]{zagier75} on $[0,1]$ is
\[
x\mapsto -x\m+\ceil{x\m}.
\]
\item The unfolded version of the Even map (see~\cite{schweiger82}, \cite{schweiger84}, \cite{dajani_et_al12} for the original versions of the Even and Odd maps)
on $[-1,1]$ is
\[
x\mapsto\abs{x\m}-(\text{the even integer nearest to $\abs{x\m}$}).
\]
\item The unfolded version of the Odd map on $[-1,1]$ is
\[
x\mapsto\abs{x\m}-(\text{the odd integer nearest to $\abs{x\m}$}).
\]
\item The Nearest Integer map~\cite{rieger79}, \cite[p.~399]{nakada81} on $[-1/2,1/2]$ is
\[
x\mapsto\abs{x\m}-(\text{the integer nearest to $\abs{x\m}$}).
\]
\end{enumerate}
Dropping for simplicity the statement about dual maps, and writing $\tau$ for the golden ratio $(\sqrt5+1)/2$, Theorem~\ref{ref3} yields the following characterizations
(analogous characterizations for the slow versions of (2)--(5) can be extracted from Examples~\ref{ref38}, \ref{ref23}, \ref{ref32}).

\begin{corollary}
Let\label{ref4} $\omega$ be a quadratic irrational in the domain of one of the maps (1)--(5). Then $\omega$ is purely periodic under the map if and only if its Galois conjugate $\omega'$ satisfies, respectively, the following inequalities.
\begin{enumerate}
\item $\omega'<0$.
\item $\omega'>1$.
\item $\omega'<-1$.
\item $\omega'\le-\tau-1$ in case $\omega<0$, or $\omega'\le-\tau+1$ in case $\omega>0$.
\item $\omega'\le-\tau-1$ in case $\omega<0$, or $\omega'\le-\tau$ in case $\omega>0$.
\end{enumerate}
\end{corollary}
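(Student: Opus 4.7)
My plan is to derive Corollary~\ref{ref4} as a direct application of Theorem~\ref{ref3} (made precise later by Theorems~\ref{ref19} and~\ref{ref31}) to each of the five listed maps. For each $F$ one has to: (a) exhibit $F$ as the geometric realization of an explicit abstract continued fraction $\Acal$ via a chosen family of unimodular intervals, and (b) identify the dual domain $K$, namely the attractor of the associated dual graph-directed iterated function system. Theorem~\ref{ref3} then yields at once that a quadratic irrational $\omega$ in the domain of $F$ is purely periodic under $F$ if and only if $\omega'\in K$, and it only remains to recognize each listed inequality as the description of the corresponding $K$.

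For the three maps with full branches the setup is simplest. The Farey map on $[0,1]$ is generated by the two parabolic matrices fixing $0$ and $\infty$ respectively; the Ceiling map on $[0,1]$ is generated by the matrices $\ppmatrix{n}{-1}{1}{0}$ for $n\ge2$; the Even map on $[-1,1]$ is generated by the analogous matrices indexed by nonzero even integers. In each case the underlying $\Acal$ has a single state, the domain $H$ is a single unimodular interval, and the dual IFS attractor $K$ is likewise a single unimodular interval, whose endpoints are the (parabolic or hyperbolic) fixed points of appropriate dual generators. One checks that the resulting sets $K$ are precisely $\{\omega'<0\}$, $\{\omega'>1\}$, and $\{\omega'<-1\}$, yielding items (1)--(3).

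The Odd and Nearest Integer maps are subtler because the smallest-integer branch is truncated, so the domain must be split into two unimodular intervals --- one on each side of $0$ --- and $\Acal$ has two states. Accordingly the dual attractor decomposes as a disjoint union $K=K^+\cupdot K^-$, and the condition on $\omega'$ depends on the sign of $\omega$. The golden ratio $\tau$ enters because the endpoint of each $K^\pm$ is a fixed point of a specific matrix in $\PSL_2\Zbb$ arising from the truncated branch, whose characteristic polynomial is, up to a unimodular change of coordinates, $x^2-x-1$. Direct fixed-point computation gives the endpoints $-\tau+1$ and $-\tau-1$ for Odd, and $-\tau$ and $-\tau-1$ for Nearest Integer, with the sign pattern exactly as in (4) and (5).

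The principal obstacle lies in the careful bookkeeping required for items (4) and (5): writing down the correct finite automaton for $\Acal$, choosing the right generating matrix for each transition, and matching the two pieces of $K$ to the two pieces of the domain of $F$ so that the sign condition on $\omega$ selects the correct piece of $K$. Once these combinatorial data are fixed, the endpoint computations reduce to routine $2\times 2$ linear algebra over $\Zbb$, and the inequalities of Corollary~\ref{ref4} follow directly from Theorem~\ref{ref3}.
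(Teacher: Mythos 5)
Your overall plan is the same as the paper's: realize each map as (an acceleration of) a Gauss-type map coming from an abstract continued fraction with a chosen family of unimodular intervals, compute the dual attractor, and read off the inequality. However, two concrete points in your write-up would break if you tried to carry them out.

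First, you conflate the set $K$ (the dual attractor, relevant for the \emph{slow} map via Theorem~\ref{ref19}) with the set $R$ (the subset on which the first-return map lives, relevant for the \emph{jump acceleration} via Theorem~\ref{ref31}). Only the Farey map in item~(1) is a slow map whose condition is $\omega'\in K$. The Ceiling, Even, Odd and Nearest Integer maps are all jump accelerations $F_{\jump}$ of slow maps, and the correct criterion in Theorem~\ref{ref31} is $\omega'\in R_{i_0}$, not $\omega'\in K_{i_0}$. This is not cosmetic: for Ceiling, the dual attractor is $K_0=[1,0]$, the complementary interval to $[0,1]$, which in affine coordinates is $\{x\ge 1\}\cup\{\infty\}\cup\{x\le 0\}$; the condition $\omega'\in K_0$ would wrongly include all negative $\omega'$. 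What yields $\omega'>1$ is $R=D_b[K_0]=[1,\infty]$.

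Second, your claim that the Even map admits a single-state abstract continued fraction on a single unimodular interval cannot be realized: the domain $[-1,1]$ is not unimodular (the corresponding $2\times2$ matrix $\bbmatrix{1}{-1}{1}{1}$ has determinant $2$). The framework forces a two-state graph over $I_0=[-1,0]$ and $I_1=[0,1]$, exactly as in the paper's Example~\ref{ref23}; one then computes $R_0=R_1=[\infty,-1]$ and the sign of $\omega$ becomes irrelevant, giving the uniform condition $\omega'<-1$. Your two-state description of Odd and Nearest Integer is in line with the paper, but you should extend that description to Even as well and, throughout items (2)--(5), replace $K$ by $R$ and invoke Theorem~\ref{ref31} rather than Theorem~\ref{ref19}.
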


Note that we cited Corollary~\ref{ref4} just as a convenient example covering familiar maps; as a matter of fact, those specific characterizations already appear in the literature~\cite{kraaikamp-lopes96}, \cite{boca-merriman18}, \cite{boca-siskaki21}. Our aim in the present paper is, on the one hand, in providing general statements (Theorems~\ref{ref19} and \ref{ref31}) covering a wide range of maps, and on the other in
developing a formalism whose ultimate goal is to clarify what a continued fraction actually \emph{is}. Granted that a final answer is probably impossible ---or maybe undesirable--- we however regard two issues as particularly relevant.
\begin{itemize}
\item[(a)] Clarify the combinatorial structure underlying a continued fraction algorithm.
\item[(b)] Clarify how a set of matrices obeying that combinatorial structure determines the domains of a dual pair of Gauss-type maps $F$, $\du{F}$ having that set of matrices as set of inverse branches.
\end{itemize}
In the simplest case of maps defined over the single unimodular interval $[0,1]$, these issues have been covered ---albeit not explicitly formulated--- in~\cite{panti22}, as follows.
\begin{itemize}
\item[(a${}'$)] The combinatorial structure is that of a maximal prefix code in the monoid of nonnegative matrices in the extended modular group.
\item[(b${}'$)] The domain of $F$ is $[0,1]$, while that of $\du{F}$ is the attractor of a certain ordinary Iterated Function System.
\end{itemize}
Essentially, the reader may think the above ``set of matrices'' as the set of inverse branches of her favorite slow Gauss-type map. Requiring that they form a ``code'' means that they generate a free monoid; this guarantees that their inverses unambiguously determine the map. Requiring maximality means that this monoid is not too thin, which ensures that the domains of both the map and of its dual contain intervals.

Unfortunately, the one-interval setting of~\cite{panti22} is too restrictive. For example, the leftmost branch of the Odd map is induced by the inverse of $B=\bbmatrix{}{-1}{1}{1}$, and there does not exist any proper interval in $\PP^1\Rbb$ which is mapped inside itself by $B$ (because otherwise successive iterates of $B$ would shrink the interval to a point, and thus~$B$ would have infinite order, instead of order~$3$). Also, the restriction to prefix codes makes the relation between a map and its dual asymmetric.

We recover the needed degree of flexibility by replacing ordinary IFS with graph-directed ones. This is a significant and unifying generalization,
that can be traced back to a series of papers (see~\cite{ghenciu_et_al17} and references therein) on the dimension spectrum of Gauss-type maps with countably many branches. For example, the matrix
$B$ above can now be seen as a contractive map from $[0,1]$ to a proper subinterval of $[-1,0]$.

We describe the needed combinatorial structure in Definition~\ref{ref6}. Again, the key requirement is that the labeled edges of the directing graph must constitute a maximal code.
Here the definitions are more involved, due to graph-directed structure; in particular, maximality has to be expressed in terms of the spectral radius of an appropriate incidence matrix. However, the consequences are the same, and we are able to show
that ---once appropriate unimodular intervals in $\PP^1\Rbb$, one for each node of the graph, have been determined--- the resulting IFS and its dual satisfy the Open Set Condition, so that the dual pair $F$, $\du{F}$ of Gauss-type maps of Theorem~\ref{ref3} can be unambiguously defined. Moreover, the attractors of these IFS, which are the natural domains of the maps, are guaranteed to contain intervals.

A characteristic of our approach, that initially appeared quite surprising to us, is that it reduces most of the proofs to combinatorial, rather than geometric, arguments. Indeed our main results, Theorems~\ref{ref19} and~\ref{ref31}, are ultimately based on the purely combinatorial Theorem~\ref{ref17}.

The structure of our paper is as follows: in~\S\ref{ref2} we define abstract continued fractions as codes over a certain structure~$\widetilde\Sigma$ that is a multi-node enlargement of the monoid $\Sigma$ of all nonnegative matrices in the extended modular group. We describe how choosing appropriate unimodular intervals in the real projective line converts an abstract continued fraction into a dual pair of graph-directed IFS, which in turn determine a dual pair of Gauss-type maps. In Remark~\ref{ref34} we briefly sketch how our approach compares with the more familiar natural extension construction, and we conclude the section by stating our main technical tool, Theorem~\ref{ref17}. In~\S\ref{ref13} we provide two detailed examples, 
the simple one of the Farey map and a much more involved one,
which is intended to display all delicate points of the construction. In~\S\ref{ref24} we prove Theorem~\ref{ref17}, and in~\S\ref{ref25} we state and prove the ``slow'' part of Theorem~\ref{ref3}, namely Theorem~\ref{ref19}. In~\S\ref{ref26} we treat the Schweiger jump operator, which had been previously encountered in~\S\ref{ref13}, and prove Theorem~\ref{ref22}, thus obtaining the ``fast'' part of Theorem~\ref{ref3}. We conclude the paper by applying our results to the maps~(2)--(5) of Corollary~\ref{ref4}.

\section{Gauss-type maps and graph-directed iterated function systems}\label{ref2}

Let $\Sigma$ be the monoid generated by the letters $l,n,f$, modulo the relations $fl=n\negthinspace f$, $fn=lf$, $f\negthinspace f=\epsilon(=\text{identity element})$. Every~~$\sigma\in\Sigma$ is uniquely expressible as a free word $w$ in $l$ and $n$, possibly followed by a single occurrence of $f$; we always assume the elements of $\Sigma$ are written in this normal form. We let $\ell(\sigma)$ be the word length of $w$, and define $\du{\sigma}$ to be the result of writing $\sigma$ backwards, exchanging $l$ with $n$, and using the relations to push the eventual $f$ to the end of the resulting word.
Clearly $\sharp$ is an involutory antiisomorphism that preserves $\ell$ (that is, $\epsilon^\sharp=\epsilon$ and, for every $\sigma,\tau\in\Sigma$, we have $\ell(\du{\sigma})=\ell(\sigma)$, $\sigma^{\sharp\sharp}=\sigma$ and $\du{(\sigma\tau)}=\du{\tau}\du{\sigma}$). Let $\set{0,\ldots,n-1}$ be a set of $n\ge1$ \newword{nodes}, and let $\widetilde\Sigma=
\set{i\sigma j:\sigma\in\Sigma\text{ and }i,j\in\set{0,\ldots,n-1}}$.
A partial binary operation is defined on $\widetilde\Sigma$ in the obvious way: the product $(i\sigma j)(h\tau k)$ is defined if and only if $j=h$ and, if so, has value $i\sigma\tau k$.
Both $\ell$ and $\sharp$ are naturally extended to $\widetilde\Sigma$ via
$\ell(i\sigma j)=\ell(\sigma)$ and $\du{(i\sigma j)}=j\du{\sigma}i$.

\begin{definition}
A \newword{code} over $\widetilde\Sigma$ is a subset $\Acal$ of $\widetilde\Sigma$ such that, whenever two products
$i_{t_1}\sigma_{t_1}j_{t_1}\cdots i_{t_r}\sigma_{t_r}j_{t_r}$
and 
$i_{q_1}\sigma_{q_1}j_{q_1}\cdots
i_{q_s}\sigma_{q_s}j_{q_s}$ of elements of $\Acal$ are defined and equal in $\widetilde\Sigma$, then
$r=s$ and $i_{t_k}\sigma_{t_k}j_{t_k}=i_{q_k}\sigma_{q_k}j_{q_k}$ for every~$k$.
\end{definition}

Clearly, if $\Acal$ is a code, then
no identity element $i\epsilon i$ belongs to $\Acal$,
and $\du{\Acal}$ is a code as well.

\begin{example}
The\label{ref10} simplest setting is with only one node $0$ ---which can then be dropped--- and no element of $\Acal$ containing the letter $f$.
In this case a code $\Acal$ is nothing else than a binary code~\cite[Chapter~6]{lothaire02}, \cite[Chapter~2]{berstel_et_al10}, with $\du{\Acal}$ the dual code. For example $\Acal=\set{l,ln,nn}$ is a code, because its dual $\du{\Acal}=\set{n,ln,ll}$ is a prefix code. The fact that $\Acal$ is a code means that every finite word over $\set{l,n}$ can be parenthesized in at most one way as a product of elements of $\Acal$; note that this does \emph{not} extend to infinite sequences. Indeed, the sequence $lnnn\ldots$ can be parenthesized in precisely two ways.
\end{example}

The monoid $\Sigma$ has two faithful geometric representations, a projective and an affine one.
The projective representation $\Proj$ is induced by mapping $l,n,f$, respectively, to the elements
\[
L=\begin{bmatrix}
1 & \\
1 & 1
\end{bmatrix},\quad
N=\begin{bmatrix}
1 & 1\\
 & 1
\end{bmatrix},\quad
F=\begin{bmatrix}
 & 1\\
1 & 
\end{bmatrix},
\]
of the \newword{extended modular group}, namely the group $\PSL^\pm_2\Zbb$ of all $2\times2$ matrices with integer entries and determinant either $1$ or $-1$, modulo the scalar subgroup~$\set{1,-1}$. We write elements of the extended modular group using  square brackets to emphasize that they are taken up to sign, blank entries denoting zeros. It is well known and easy to prove that $\Proj[\Sigma]$ is precisely the monoid of all nonnegative (again, up to multiplication by $-1$) matrices in $\PSL^\pm_2\Zbb$. Note that $\Proj(\du{\sigma})=\Proj(\sigma)\transpose$, the exponent denoting transpose.

The affine representation $\Aff$ of $\Sigma$ is induced by mapping $l,n,f$, respectively, to the maps on $\Rbb$ with dyadic coefficients
\[
\Lbf(x)=2\m x,\quad
\Nbf(x)=2\m x+2\m,\quad
\Fbf(x)=-x+1.
\]
It is readily seen that this is again a faithful representation, that $\Aff(\sigma)$ is a contraction of factor $2^{-\ell(\sigma)}$, and that all elements of $\Aff[\Sigma]$ map the real unit interval $[0,1]$ into itself.
The two representations $\Proj$ and $\Aff$ are topologically conjugate; this is proved in~\cite[Theorem~5.1]{panti22}, and used here in Theorem~\ref{ref28}.
 
There is a bijection between elements of the standard modular group $\PSL_2\Zbb$ (in which only matrices of determinant $1$ are allowed) and \newword{unimodular intervals} in~$\PP^1\Rbb$. Indeed, we identify the latter with the boundary of the Poincar\'e disk, and let $\bbmatrix{p'}{p}{q'}{q}$ correspond to the closed interval $[p/q,p'/q']$ described by going from $p/q$ to $p'/q'$ in the counterclockwise direction. Equivalently, $[p/q,p'/q']$ is the image of the base interval $[0,\infty]$ (which corresponds to the identity matrix) under $\bbmatrix{p'}{p}{q'}{q}$, with acts in the standard projective way $x\mapsto (p'x+p)/(q'x+q)$; as usual, we identify matrices with the maps they induce. Letting $S=\bbmatrix{}{-1}{1}{}$, we have that $\bbmatrix{p'}{p}{q'}{q}S$ corresponds to the \newword{complementary interval} $[p'/q',p/q]$ of 
$[p/q,p'/q']$.

Let $\Acal\subset\widetilde\Sigma$ be a finite code. We write $\Gcal_\Acal$ for the finite directed graph whose vertices are the nodes $0,\ldots,n-1$, and whose set of edges from $i$ to $j$ is $E_{ij}=\set{\text{elements of $\Acal$ of the form $i\sigma j$}}$.
We let $E_{ij}^{<\omega}$ be the set of all paths from $i$ to $j$, $E_i^\omega$ the set of all infinite paths from $i$, and $E^\omega=\bigcup_i E_i^\omega$. We also let $G_\Acal$ be the $n\times n$ matrix whose $ij$-th entry is $\sum\set{2^{-\ell(a)}:a\in E_{ij}}$.

\begin{definition}
An\label{ref6} \newword{abstract continued fraction} is a finite code $\Acal\subset\widetilde\Sigma$ such that $\Gcal_\Acal$ is strongly connected (that is, every set $E_{ij}^{<\omega}$ is nonempty) and $G_\Acal$ has spectral radius~$1$.
\end{definition}

Definition~\ref{ref6} makes precise the ``combinatorial structure'' alluded to in item~(a) in the introduction. The requirement of being a code ensures that the Gauss-type map to be introduced in Definition~\ref{ref16} is well-defined, while that on the spectral radius ensures (see the proof of Theorem~\ref{ref28}) that the domain of $\Fcal$ contains intervals.

By construction, the graph $\Gcal_{\du{\Acal}}$ has the same vertices as $\Gcal_\Acal$, and all arrows are reversed, the set of edges from $i$ to $j$ being $(\du{E})_{ij}=\set{\du{a}:a\in E_{ji}}$. Since~$\ell(a^\sharp)=\ell(a)$, we have $G_\du{\Acal}=G_\Acal\transpose$; thus, if $\Acal$ is an abstract continued fraction, so is its \newword{dual}~$\du{\Acal}$. 

Let us now fix $n$ unimodular intervals $I_0,\ldots, I_{n-1}$, one for each node; we abuse notation by writing $I_i$
both for the $i$-th interval and for the 
matrix in $\PSL_2\Zbb$ corresponding to it.

\begin{lemma}
Let\label{ref11} $i\sigma j\in\widetilde\Sigma$. Then the matrix $B_{i\sigma j}=I_i\Proj(\sigma)I_j\m$ maps $I_j$ to $I_i$, while 
its inverse $B_{i\sigma j}\m$ equals $(I_jS)\Proj(\du{\sigma})(I_iS)\m$ and maps $I_iS$ to $I_jS$. If $\ell(\sigma)\ge1$, then the images of these maps are proper subintervals of the target intervals.
\end{lemma}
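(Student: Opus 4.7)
The plan is to factor $B_{i\sigma j}$ through the base unimodular interval $[0,\infty]$ and exploit nonnegativity of $\Proj(\sigma)$. By the conventions set up before the lemma, $I_j\in\PSL_2\Zbb$ is the projective map carrying $[0,\infty]$ onto the unimodular interval $I_j$, so $I_j\m$ sends $I_j$ bijectively back to $[0,\infty]$. The middle factor $\Proj(\sigma)$, being a nonnegative element of $\PSL^\pm_2\Zbb$, preserves the positive semiaxis and thus sends $[0,\infty]$ into itself (possibly with orientation reversed, if $\sigma$ ends in $f$). Composing with $I_i:[0,\infty]\to I_i$ gives the claim that $B_{i\sigma j}$ maps $I_j$ into $I_i$.

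To upgrade this to proper containment when $\ell(\sigma)\ge1$, I would use the normal form $\sigma=w$ or $\sigma=wf$ with $w$ a nonempty word in $l,n$. Factoring $\Proj(\sigma)$ according to the leftmost letter of $w$ as $L\,\Proj(\sigma')$ or $N\,\Proj(\sigma')$, one observes directly that $L$ sends $[0,\infty]$ into $[0,1]$ while $N$ sends $[0,\infty]$ into $[1,\infty]$; since $\Proj(\sigma')$ already maps $[0,\infty]$ into $[0,\infty]$ by the previous paragraph, the total image lies in a proper subinterval of $[0,\infty]$. The flanking maps $I_j\m$ and $I_i$ are homeomorphisms of closed intervals, so proper inclusions are preserved and $B_{i\sigma j}(I_j)\subsetneq I_i$.

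For the inverse, the key algebraic input is the identity $M\m=SM\transpose S\m$ holding in $\PSL^\pm_2\Zbb$ for every $2\times2$ matrix of determinant $\pm1$, which follows at once from $\bbmatrix{a}{b}{c}{d}\m=\pm\bbmatrix{d}{-b}{-c}{a}$ and a direct computation with $S=\bbmatrix{}{-1}{1}{}$. Combined with the already-noted $\Proj(\du{\sigma})=\Proj(\sigma)\transpose$, this gives
\[
B_{i\sigma j}\m=I_j\,\Proj(\sigma)\m\,I_i\m=I_jS\,\Proj(\du{\sigma})\,S\m I_i\m=(I_jS)\,\Proj(\du{\sigma})\,(I_iS)\m.
\]
Since $I_iS$ and $I_jS$ correspond, by the discussion preceding the lemma, to the complementary intervals of $I_i$ and $I_j$, one now repeats the argument of the first two paragraphs with $\sigma$ replaced by $\du{\sigma}$, using $\ell(\du{\sigma})=\ell(\sigma)$, to conclude that $B_{i\sigma j}\m$ maps $I_iS$ into $I_jS$, properly when $\ell(\sigma)\ge1$. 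I do not expect any genuine obstacle here; the only delicate point is the sign bookkeeping in the matrix identity, which is innocuous once we remember we are working modulo $\pm1$.
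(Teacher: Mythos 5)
Your proof is correct and takes essentially the same route as the paper's: establish that $\Proj(\sigma)$ maps $[0,\infty]$ into itself (properly unless $\ell(\sigma)=0$, i.e.\ unless $\sigma\in\{\epsilon,f\}$), transport this by the defining conjugation $I_i\Proj(\sigma)I_j\m$, then obtain the statement for the inverse from the algebraic identity $S\Proj(\sigma)\transpose S\m=\Proj(\sigma)\m$ together with $\Proj(\du{\sigma})=\Proj(\sigma)\transpose$ and $\ell(\du{\sigma})=\ell(\sigma)$. You simply make two steps fully explicit that the paper leaves as remarks: the strict containment via the factorization through the leftmost $L$ or $N$, and the matrix identity $M\m=SM\transpose S\m$ in $\PSL_2^\pm\Zbb$.
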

\begin{proof}
It is clear that $\Proj(\sigma)$ maps $[0,\infty]$ into itself, and never onto itself unless $\sigma$ equals $\epsilon$ or $f$. Since these are the only elements $\sigma\in\Sigma$ such that $\ell(\sigma)=0$, the statements about $B_{i\sigma j}$ are immediate. As
\[
(I_jS)\Proj(\du{\sigma})(I_iS)\m=
I_jS\Proj(\sigma)\transpose S\m I_i\m=
I_j\Proj(\sigma)\m I_i\m=B_{i\sigma j}\m,
\]
and $\sharp$ preserves~$\ell$, the statements about $B_{i\sigma j}\m$ follow.
\end{proof}

Given any finite $\Acal\subset\widetilde\Sigma$ and intervals $I_i$ as above, the pair $(\Gcal_\Acal,\set{B_a}_{a\in\Acal})$ constitutes a \newword{graph-directed iterated function system} (g-d IFS).
This simply means that,
for each pair $I_i$, $I_j$, we are considering 
the finitely many projective maps from $I_j$ to $I_i$ determined
by $\set{B_a:a\in E_{ij}}$; see~\cite{mauldin-williams88}, \cite{edgar-golds99}, \cite{das-edgar05} for a detailed treatment.
As customary, if $a_0\ldots a_{t-1}$ is a finite path in $\Gcal_\Acal$ we abbreviate the product $B_{a_0}\cdots B_{a_{t-1}}$ by $B_{a_0\ldots a_{t-1}}$.
Note that, by definition,
the intervals $I_0,\ldots,I_{n-1}$ must be realized in pairwise disjoint copies of $\PP^1\Rbb$; we stress this fact by using disjoint union notations such as $\bigcupdot_iI_i=\set{(i,x):x\in I_i}$.

The following is a graph-directed version of the classical Ping-Pong Lemma for monoids~\cite[VII.A.2]{delaharpe00}.

\begin{lemma}
Let\label{ref14} $(\Gcal_\Acal,\set{B_a})$ be as above, and assume that:
\begin{itemize}
\item[(a)] There exist nonempty sets $U_i\subseteq I_i$ such that, for every $i$, the images $B_{i\sigma j}[U_j]$ are pairwise disjoint subsets of $U_i$.
\item[(b)] For every $j$ and every $a_0\ldots a_{p-1}\in E_{jj}^{<\omega}$ the map $B_{a_0\ldots a_{p-1}}$ is not the identity on $I_j$ (this is surely true if $\ell(a)\ge1$ for every $a\in\Acal$).
\end{itemize}
Then $\Acal$ is a code.
\end{lemma}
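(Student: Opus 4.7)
The plan is a graph-directed Ping-Pong argument: assume a (hypothetical) double parse and show the initial factors must match, cancel, and iterate. The only real subtlety is keeping track of which unimodular interval is in play at each step.

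First I would fix two products $a_1\cdots a_r$ and $b_1\cdots b_s$ of elements of $\Acal$ that are defined and equal in $\widetilde\Sigma$, say both elements of $E_{ij}^{<\omega}$, and assume WLOG $r\le s$. Write $a_k=i_{k-1}\sigma_k i_k$ and $b_k=i'_{k-1}\tau_k i'_k$ with $i_0=i'_0=i$ and $i_r=i'_s=j$. Equality in $\widetilde\Sigma$ means $\sigma_1\cdots\sigma_r=\tau_1\cdots\tau_s$ in $\Sigma$, so by the definition of the maps $B_a$ and the faithfulness of $\Proj$ one has the equality of maps $B_{a_1\cdots a_r}=B_{b_1\cdots b_s}\colon I_j\to I_i$.

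Next I would prove, by downward induction on $k$, that for any path $c_1\cdots c_r$ with $c_k\in E_{h_{k-1}h_k}$ and any $k\in\{1,\ldots,r\}$, $B_{c_k\cdots c_r}[U_{h_r}]\subseteq U_{h_{k-1}}$. The base $k=r$ is the hypothesis (a) at node $h_{r-1}$, and the induction step follows by writing $B_{c_k\cdots c_r}=B_{c_k}\circ B_{c_{k+1}\cdots c_r}$, using the inductive hypothesis to land inside $U_{h_k}$, and then applying (a) again at $h_{k-1}$. Specialising to $k=2$, the image $B_{a_1\cdots a_r}[U_j]$ lies inside $B_{a_1}[U_{i_1}]$ and, similarly, $B_{b_1\cdots b_s}[U_j]$ lies inside $B_{b_1}[U_{i'_1}]$. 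Since the two images coincide and $U_j$ is nonempty, their common value is a nonempty subset of $U_i$ contained in both $B_{a_1}[U_{i_1}]$ and $B_{b_1}[U_{i'_1}]$, forcing $a_1=b_1$ by the disjointness clause in (a) (and in particular $i_1=i'_1$, $\sigma_1=\tau_1$).

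Then I would cancel: composing on the left with $B_{a_1}^{-1}$ yields $B_{a_2\cdots a_r}=B_{b_2\cdots b_s}\colon I_j\to I_{i_1}$, and repeating the same Ping-Pong argument at node $i_1$ gives $a_2=b_2$. Inductively, $a_k=b_k$ for every $k\le r$.

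Finally I would dispose of the case $r<s$. After $r$ cancellations we obtain $B_{b_{r+1}\cdots b_s}=\mathrm{id}_{I_j}$, where $b_{r+1}\cdots b_s$ is a path in $E_{jj}^{<\omega}$ (since $i_r=j=i'_s$). This directly contradicts hypothesis~(b), so necessarily $r=s$ and we are done. The main delicate point is ensuring the ping-pong induction keeps the image inside the correct $U_{h}$ at each node; once that bookkeeping is in place, both the matching of initial factors and the final use of (b) are immediate. The parenthetical remark in (b) is also clear, since if every $a\in\Acal$ has $\ell(a)\ge1$, then Lemma~\ref{ref11} makes $B_{a_0\cdots a_{p-1}}$ a strict contraction on $I_j$, which cannot be the identity.
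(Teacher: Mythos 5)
Your proof is correct and follows essentially the same Ping-Pong strategy as the paper's: cancel common prefixes using the disjointness in (a) to match initial factors, and invoke (b) to rule out a nontrivial leftover path inducing the identity. The paper phrases it as the contrapositive (distinct paths give distinct maps) and cancels a common prefix up front, while you argue directly and peel factors one at a time, but the underlying ideas and the role of each hypothesis are identical. One small slip of wording: at the step where equality in $\widetilde\Sigma$ gives equality of the maps $B_{a_1\cdots a_r}=B_{b_1\cdots b_s}$, what you are using is merely that $\Proj$ is a homomorphism, not its faithfulness (faithfulness is the converse implication).
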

\begin{proof}
Let $a_0\ldots a_{p-1},b_0\ldots b_{q-1}\in E_{ij}^{<\omega}$ be different paths in $\Gcal_\Acal$. We want to show that the relative products are different elements of $\widetilde\Sigma$, and this follows from showing that $B_{a_0\ldots a_{p-1}}$ and $B_{b_0\ldots b_{q-1}}$ are different as functions from $I_j$ to $I_i$. By cancelling an eventual common prefix, we assume without loss of generality that either $a_0\not=b_0$, or $b_0\ldots b_{q-1}$ is the empty path and $a_0\ldots a_{p-1}$ a nonempty path, both from $j$ to $j$.
In the first case the conclusion follows from $B_{a_0\ldots a_{p-1}}[U_j]\cap B_{b_0\ldots b_{q-1}}[U_j]=\emptyset$, and in the second from condition~(b).
\end{proof}

If $\Acal$ is an abstract continued fraction, then (b) in Lemma~\ref{ref14} is obviously true because $\Acal$ is a code, while (a) is true in the stronger form that the sets $U_i$ can be taken to be open; this is proved in Theorem~\ref{ref28}. A graph-directed IFS that obeys (a) in this stronger form is said to satisfy the \newword{Open Set Condition}~\cite[Definition~3.11]{edgar-golds99}, \cite[\S3]{das-edgar05}. Note that ``open'' here, and all topological concepts in the paper, refer to the ambient topology of $\PP^1\Rbb$.

\begin{lemma}
Let\label{ref27} $\Acal$ be a code and let $\abf=a_0a_1\ldots\in E_{i_0}^\omega$, with $a_t=i_t\sigma_t i_{t+1}$. Then the intersection of the descending chain of unimodular intervals
\begin{equation}\label{eq7}
I_{i_0}\supseteq B_{a_0}I_{i_1}\supseteq
B_{a_0a_1}I_{i_2}\supseteq\cdots
\end{equation}
is a singleton. Letting $\pi(\abf)\in I_{i_0}$ be the element of that singleton, we have a well defined map $\pi$ from $E^\omega$ to $\bigcupdot_i I_i$, which is continuous with respect to the natural topology of $E^\omega$.
\end{lemma}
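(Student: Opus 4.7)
My plan is to establish the three parts in sequence, after which continuity of $\pi$ will drop out essentially for free. First I would verify that the chain is descending: this is immediate from Lemma~\ref{ref11}, since $B_{a_{t-1}}I_{i_t}\subseteq I_{i_{t-1}}$ and applying the M\"obius transformation $B_{a_0\cdots a_{t-2}}$ preserves the inclusion. Nonemptiness of $\bigcap_t B_{a_0\cdots a_{t-1}}I_{i_t}$ then follows by the finite intersection property applied to a descending chain of nonempty closed subsets of the compact space $\PP^1\Rbb$.

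The heart of the argument is showing this intersection reduces to a single point, and for this I would pass to the affine side via the topological conjugacy between $\Proj$ and $\Aff$ furnished by Theorem~\ref{ref28}. Under this conjugacy each $\Aff(\sigma_t)$ is a Euclidean contraction on $[0,1]$ of ratio $2^{-\ell(\sigma_t)}$, so $\Aff(\sigma_0\cdots\sigma_{t-1})$ contracts by factor $2^{-\sum_{s<t}\ell(\sigma_s)}$. It therefore suffices to verify that $\sum_t\ell(\sigma_t)=\infty$, and this is where the code hypothesis does the real work. If only finitely many letters had $\ell(\sigma_t)\ge 1$, then from some index $T$ onward every $\sigma_t$ would lie in $\{\epsilon,f\}$; since no identity edge $i\epsilon i$ belongs to $\Acal$, each such $\sigma_t$ must in fact equal $f$. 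With only finitely many nodes, pigeonhole forces some node $i$ to recur along a stretch of $f$-edges of length $r\ge 1$; composing these $r$ consecutive elements of $\Acal$ in $\widetilde\Sigma$ yields $i f^r i$, which equals the identity $i\epsilon i$ when $r$ is even and $ifi$ when $r$ is odd. In the first case the identity is directly expressed as a nontrivial product of code elements; in the second, squaring that product gives the identity as a nontrivial product of $2r$ code elements (the composition is defined since the stretch starts and ends at $i$). Either way, the unique-factorization property of the code $\Acal$ is violated, a contradiction.

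Continuity is then automatic: a basic open neighborhood of $\mathbf{a}\in E^\omega$ in the natural product topology is a cylinder prescribing some prefix $a_0\cdots a_{t-1}$, and every path sharing this prefix is sent by $\pi$ into $B_{a_0\cdots a_{t-1}}I_{i_t}$, whose diameter we have just shown tends to zero. The one real obstacle I anticipate is the subtlety introduced by grade-zero letters $\sigma=f$, for which $B_a$ is only an order-reversing bijection $I_j\to I_i$ rather than a strict contraction; the pigeonhole argument above is precisely the device that converts the code hypothesis into the strict contraction needed in the limit.
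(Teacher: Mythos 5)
Your approach is correct and its combinatorial core is exactly the paper's: pigeonhole on the finitely many nodes, combined with the code hypothesis to forbid a nonempty product of code letters from equaling $i\epsilon i$ or $ifi$. Where you diverge is in how this is converted into ``the intersection is a point.'' The paper never leaves the projective picture: it fixes a node $j$ recurring infinitely often along $\abf$ and observes that each return block $a_{t(k)}\cdots a_{t(k+1)-1}\in E_{jj}^{<\omega}$ must induce a \emph{proper} inclusion $B_{a_{t(k)}\cdots a_{t(k+1)-1}}I_j\subsetneq I_j$ (otherwise the block would equal $j\epsilon j$ or $jfj$, hence its own square or cube in $\widetilde\Sigma$, contradicting the code property), and then cites Observation~3 of \cite{panti09}, to the effect that a properly nested chain of unimodular intervals has singleton intersection. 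You instead transport the whole chain to the affine twin IFS, where $\Aff(\sigma)$ contracts by $2^{-\ell(\sigma)}$, and argue $\sum_t\ell(\sigma_t)=\infty$. Both routes hinge on the same use of the code axiom; yours makes the metric shrinkage explicit at the price of invoking a conjugacy, while the paper's stays intrinsic to $\PP^1\Rbb$ at the price of an external arithmetic fact about unimodular intervals.

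One blemish you should repair: you attribute the conjugacy to Theorem~\ref{ref28}, but that theorem is stated and proved \emph{after} Lemma~\ref{ref27} and its proof explicitly invokes ``the proof of Lemma~\ref{ref27}'' to bound the contraction rates of $\Ebf_{ii}^{<\omega}$. Quoting Theorem~\ref{ref28} here is therefore circular. What you actually need is only the topological conjugacy between the projective and affine representations of $\Sigma$ via the Minkowski Question Mark function, which the paper cites independently from \cite[Theorem~5.1]{panti22} in the paragraph introducing $\Aff$, before Lemma~\ref{ref11}. Replacing the reference and spelling out the unimodular change of coordinates that identifies each $I_i$ with $[0,1]$ (so that $B_{a_0\cdots a_{t-1}}I_{i_t}$ is carried onto $\Aff(\sigma_0\cdots\sigma_{t-1})[0,1]$) removes the circularity, after which your argument stands; continuity then follows as you say, with the small caveat that uniform continuity of the homeomorphism $I_{i_0}\to[0,1]$ is what lets you pull back the affine diameter bound.
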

\begin{proof}
Let $0\le t(0)<t(1)<t(2)<\cdots$ be a sequence
of indices such that the intervals $I_{i_{t(k)}}$ are all equal, say to $I_j$. We claim that each interval of the chain
\begin{equation}\label{eq8}
I_j\supset B_{a_{t(0)}\cdots a_{t(1)-1}}I_j\supset
B_{a_{t(0)}\cdots a_{t(2)-1}}I_j\supset\cdots
\end{equation}
is a proper subinterval of the preceding one.
It suffices to show this fact for the first inclusion. Now, if that inclusion were an equality, then we would have that $a_{t(0)}\cdots a_{t(1)-1}$ equals either $j\epsilon j$ or $jfj$ in $\widetilde\Sigma$. But then
$a_{t(0)}\cdots a_{t(1)-1}$ would be equal either to its square or to its cube, which is impossible since $\Acal$ is a code.
By~\cite[Observation~3]{panti09} the intersection of the chain~\eqref{eq8} is a singleton, and therefore so is the intersection of~\eqref{eq7}. The statement about the continuity of $\pi$ is clear; see~\cite[\S2]{wang97} or \cite[p.~433]{edgar-golds99} for details.
\end{proof}

Letting $H_i=\pi[E_i^\omega]\not=\emptyset$, we have that
the \newword{attractor} $\bigcupdot_iH_i$ of the graph-directed IFS $(\Gcal_\Acal,\set{B_a})$ is compact.

\begin{remark}
The\label{ref21} attractor can equivalently be defined as the only fixed point of the graph-directed version of the Hutchinson operator $(T_0,\ldots,T_{n-1})\mapsto(T_0',\ldots,T_{n-1}')$, where each $T_i$ is a compact subset of $I_i$ and
\[
T_i'=\bigcup\bigl\{B_a[T_j]:a\in E_{ij}\bigr\}.
\]
Since the operator is a contraction with respect to the Hausdorff metric, starting from any $n$-tuple and repeatedly applying it yields a sequence of $n$-tuples $(T_0^t,\ldots,T_{n-1}^t)$ that converges to $(H_0,\ldots,H_{n-1})$.
It is usually ---but not always, even in the one-interval case, see~\cite[Example~4.5]{panti22}--- easy to guess what the limit may be, and then explicitly check that the guess is correct; Lemma~\ref{ref35} provides a nontrivial example.

For the convenience of the interested reader, we provide an easy implementation of the Hutchinson operator in SageMath, using the built-in construct \texttt{RealSet}.
Without loss of generality, we start with a list
\begin{verbatim}
T0=[RealSet([0,1]) for i in range(n)]
\end{verbatim}
of $n$ copies of $[0,1]$, and we assume given a list \texttt{B} of length $n$ of lists of length $n$, whose elements are (possibly empty) lists of $2\times2$ matrices, each of them mapping $[0,1]$ into itself.
Thus, \texttt{B[i][j]} is the list of matrices corresponding to the elements of $E_{ij}$, and all these matrices map the copy of $[0,1]$ indexed by $j$ to the copy of $[0,1]$ indexed by $i$. An implementation of the Hutchinson operator is then
\begin{verbatim}
def mq(m,q):
    q_vertices=[q.lower(), q.upper()] 
    return [(m[0,0]*x+m[0,1])/(m[1,0]*x+m[1,1]) \
            for x in q_vertices]
def HO(RS):
    RS0=[]
    for i in range(n):
        RS0i=RealSet()
        for j in range(n):
            for qq in [mq(m,q) \
                       for m in B[i][j] for q in RS[j]]:
                RS0i=RS0i.union(RealSet(qq))
        RS0=RS0+[RS0i]
    return RS0
\end{verbatim}
The code is straightforward. The function \texttt{mq} takes as input a matrix and an element of a \texttt{RealSet}, i.e., an interval, and returns the list of endpoints of the image interval. The function \texttt{HO} takes as input a list $(T_0,\ldots,T_{n-1})$ of \texttt{RealSets}, all of them subsets of $[0,1]$, and returns the image list $(T'_0,\ldots,T'_{n-1})$. Starting from \texttt{T0} and repeatedly applying \texttt{HO} one obtains the sequence $(T_0^t,\ldots,T_{n-1}^t)$.
\end{remark}

\begin{theorem}
Let\label{ref28} $\Acal$ be an abstract continued fraction and let $\set{I_i}_{i<n}$ be unimodular intervals. Then the graph-directed IFS $(\Gcal_\Acal,\set{B_a})$ satisfies the Open Set Condition, and all sets $H_0,\ldots,H_{n-1}$ in the attractor are regular (i.e., each one is the closure of its interior). In particular each $H_i$ contains intervals, and the Open Set Condition is satisfied by taking as open sets the interiors of the various $H_i$.
\end{theorem}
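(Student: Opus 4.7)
My plan is to prove both assertions together via the Perron--Frobenius theory of $G_\Acal$ combined with a measure-theoretic use of the code hypothesis. Strong connectivity of $\Gcal_\Acal$ makes $G_\Acal$ irreducible, and with $\rho(G_\Acal)=1$ this yields unique (up to positive scaling) positive right and left Perron eigenvectors $\lambda,\mu$ satisfying $G_\Acal\lambda=\lambda$ and $\mu G_\Acal=\mu$. Using the topological conjugacy of Theorem~5.1 of~\cite{panti22}, I transport Lebesgue measure from $[0,1]$ to each unimodular interval $I_i$, obtaining a Borel probability $\nu_i$ on $I_i$ that carries the scaling identity $\nu_i(B_a[E])=2^{-\ell(a)}\nu_j(E)$ for every $a=i\sigma j\in\widetilde\Sigma$ and Borel $E\subseteq I_j$. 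The self-similar decomposition $H_i=\bigcup_{a\in E_{ij}}B_a[H_j]$ then yields the componentwise inequality $\nu(H)\le G_\Acal\nu(H)$; pairing with $\mu>0$ and using $\mu G_\Acal=\mu$ forces equality, so $\nu(H)=c\lambda$ for some $c\ge 0$, and the pieces $B_a[H_j]$ inside each $H_i$ are $\nu_i$-essentially disjoint.

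The main obstacle is to prove strict positivity $c>0$; this is the step where the code hypothesis is critically used, for otherwise each $H_i$ could be a null Cantor-type set and regularity would fail. I would endow $E_i^\omega$ with the Markov probability $\mathbb{P}_i$ giving each cylinder $[a_0\cdots a_{t-1}]$ (with $a_s=i_s\sigma_si_{s+1}$ and $i_0=i$) the weight $2^{-\ell(a_0)-\cdots-\ell(a_{t-1})}\lambda_{i_t}/\lambda_i$; consistency is exactly the eigenvector identity $G_\Acal\lambda=\lambda$. Because $\Acal$ is a code and $\Proj$ is faithful, distinct finite paths yield distinct matrix products $B_{a_0\cdots a_{t-1}}$; Lemma~\ref{ref27} then ensures that the corresponding cylinder images $B_{a_0\cdots a_{t-1}}[I_{i_t}]$ form a refining family of unimodular subintervals of $I_i$ shrinking to points, and the scaling identity gives $\nu_i(B_{a_0\cdots a_{t-1}}[I_{i_t}])=2^{-\ell(a_0)-\cdots-\ell(a_{t-1})}$, comparable to the $\mathbb{P}_i$-mass of the corresponding cylinder. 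Together these allow a uniform bound on the local multiplicity of the cylinder cover, yielding an estimate $\pi_*\mathbb{P}_i\le C\nu_i$ for some constant $C$. Since $\pi_*\mathbb{P}_i$ is a probability measure supported in $H_i$, this forces $\nu_i(H_i)\ge 1/C>0$, whence $c>0$.

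With $c>0$ in hand, regularity of each $H_i$ follows from a Lebesgue density argument: the $\nu_i$-essential disjointness of the self-similar pieces, combined with the shrinking-cylinder structure, forces every $\nu_i$-density point of $H_i$ to be an interior point of $H_i$; such density points are $\nu_i$-a.e.\ in $H_i$, so $H_i=\overline{\mathrm{int}(H_i)}$ and in particular each $H_i$ contains intervals. The Open Set Condition is then satisfied by taking $U_i:=\mathrm{int}(H_i)$, now nonempty: the containment $B_a[U_j]\subseteq U_i$ follows from the continuity of $B_a$ together with the self-similar decomposition, while the required pairwise disjointness is obtained by passing the $\nu_i$-essential disjointness of the pieces $B_a[H_j]$ to interiors.
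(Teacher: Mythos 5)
Your measure-theoretic framework is sound as far as it goes: transporting Lebesgue measure to each $I_i$ via the Minkowski conjugacy does yield measures $\nu_i$ satisfying $\nu_i(B_a[E])=2^{-\ell(a)}\nu_j(E)$, the self-similar decomposition plus irreducibility of $G_\Acal$ does force componentwise equality $\nu(H)=G_\Acal\nu(H)$, and hence $\nu(H)=c\lambda$ with the pieces $\nu$-essentially disjoint. But the two steps that carry all the real weight, $c>0$ and regularity, contain gaps.

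For $c>0$: the bound $\pi_*\mathbb{P}_i\le C\nu_i$ that you need rests on the parenthetical claim that there is ``a uniform bound on the local multiplicity of the cylinder cover.'' This is exactly the content of the Weak Separation Property, and it does \emph{not} follow merely from $\Acal$ being a code plus faithfulness of $\Proj$. The code hypothesis tells you that distinct finite paths yield distinct matrices $B_{a_0\cdots a_{t-1}}$; it says nothing about the images $B_{a_0\cdots a_{t-1}}[I_{i_t}]$ being disjoint or having bounded overlap (in the example of~\S\ref{ref13} one even has $B_s=B_t$ and $B_q=B_r$ as matrices). The paper closes this gap with a specific structural input: the Claim on p.~69 of~\cite{panti22}, which shows the identity is isolated in the relevant sets of compositions $\Cbf^{-1}\Dbf$, giving condition~(3a) of~\cite{das-edgar05} (the WSP); this together with the code hypothesis (``distinguishing paths'') and~\cite[Prop.~3.1]{das-edgar05} yields OSC \emph{before} any measure-theoretic step is taken. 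In your writeup that nontrivial ingredient is just asserted.

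For regularity: a density point of a compact set of positive Lebesgue measure need not be an interior point (a fat Cantor set is a standard counterexample). Invoking the ``shrinking-cylinder structure'' does not save this: along the cylinders $J_t\ni x$ one computes, using essential disjointness, that $\nu_i(J_t\cap H_i)/\nu_i(J_t)=\nu_{i_t}(H_{i_t})$, a constant strictly less than $1$ in general (and in fact $<1$ in the paper's example, where $H_0\subsetneq I_0$). So cylinder densities do not tend to $1$, and they do not form a Vitali family centered at $x$, so the Lebesgue density theorem gives no contradiction. The correct argument --- the one the paper adapts from Schief~\cite{schief94}, Corollary~2.3 --- goes the other way: one first has OSC with \emph{a priori} nonempty open sets $U_i$, shows $G_\Acal u\le u$ must be equality via the left Perron vector, concludes the nested open sets $\bigcup\Cbf_a[U_j]\subseteq U_i$ have equal measure and hence the smaller one is dense in the larger, and then uses uniqueness of the attractor to get $H_i=\overline{U_i}$. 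Your plan of taking $U_i=\mathrm{int}(H_i)$ and deducing OSC at the end is circular without an independent proof that $\mathrm{int}(H_i)\neq\emptyset$.

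In short, you have reversed the logical order of the paper's proof (positive measure $\Rightarrow$ regularity $\Rightarrow$ OSC, rather than WSP $\Rightarrow$ OSC $\Rightarrow$ measure equality $\Rightarrow$ regularity), but the two ascending steps in your chain are precisely the ones that need the nontrivial inputs the paper supplies, and you have not supplied them.
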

\begin{proof}
We may safely assume that each $I_i$ is a copy $[0,1]_i$ of the real unit interval; then, for every $i\sigma j\in\Acal$, we have $B_{i\sigma j}=L\Proj(\sigma)L\m$. We introduce a twin g-d affine IFS 
$(\Gcal_\Acal,\set{\Cbf_a})$ over $\set{[0,1]_i}$ by letting $\Cbf_{i\sigma j}=\Aff(\sigma)$.

An important fact about continued fractions is the existence of a homeomorphism of $[0,1]$, named the Minkowski Question Mark function~\cite{denjoy38}, \cite{salem43}, \cite{jordansahlsten16}, that conjugates the Farey map with the tent map $x\mapsto \min(2x,-2x+2)$.
For each~$i$, let $M_i:[0,1]_i\to[0,1]_i$ be a copy of the Minkowski homeomorphism. Then, by~\cite[Theorem~5.1]{panti22}, for every $i\sigma j\in\Acal$ the square
\[
\begin{tikzcd}[column sep=1.5em]
{[0,1]}_i \ar[d,"M_i"'] & &
{[0,1]}_j \ar[ll,"B_{i\sigma j}"']   \ar[d,"M_j"] \\
{[0,1]}_i & & {[0,1]}_j \ar[ll,"\Cbf_{i\sigma j}"]
\end{tikzcd}
\]
commutes. This means that the projective and the affine g-d IFS are topologically conjugate by the family $\set{M_i}$, and therefore the attractor $\bigcupdot_iH_i$ of the projective g-d IFS and the attractor $\bigcupdot_iH'_i$ of the affine one are related by $H'_i=M_i[H_i]$ for every~$i$. Since each $M_i$ is a homeomorphism, it suffices to show that each $H'_i$ is regular.

For each node $i$, let $\Ebf_{ii}^{<\omega}$ be the set of all affine maps of the form $\Cbf_{a_0\ldots a_{p-1}}$, where $a_0\ldots a_{p-1}\in E_{ii}^{<\omega}$. By the proof of Lemma~\ref{ref27}, the contraction rate of every element of $\Ebf_{ii}^{<\omega}$ is bounded above by $2\m$; by~\cite[pp.~435-436]{edgar-golds99}, the affine g-d IFS is contractive. Now, the claim in~\cite[p.~69]{panti22} shows that the identity function is an isolated point in each set $\set{\Cbf\m\Dbf:\Cbf,\Dbf\in\Ebf_{ii}^{<\omega}}$; therefore Condition~(3a) in~\cite[p.~140]{das-edgar05} holds, and the affine g-d IFS satisfies the Weak Separation Property. Noting that being a code amounts, in the terminology of~\cite{das-edgar05}, to distinguishing paths, \cite[Proposition~3.1]{das-edgar05} applies and therefore $(\Gcal_\Acal,\set{\Cbf_a})$ satisfies the Open Set Condition.

We show that each $H'_i$ is regular by adapting the proof of~\cite[Corollary~2.3]{schief94} to the graph-directed setting (to the best of our knowledge the possibility of this direct adaptation has not been noted in the literature). Let $(U_0,\ldots,U_{n-1})$ be a list of open sets $U_i\subseteq[0,1]_i$ satisfying the requirements of Lemma~\ref{ref14}(a). Letting $\lambda$ denote Lebesgue measure and $u_i=\lambda(U_i)>0$, we have for every $i$
\begin{align*}
\lambda\bigl(\bigcup\set{\Cbf_a[U_j]:a\in E_{ij}}\bigr)
=&\sum\set{\lambda(\Cbf_a[U_j]):a\in E_{ij}}\\
=&\sum\set{2^{-\ell(a)}u_j:a\in E_{ij}}\\
=&\sum_j g_{ij}u_j\\
\le&u_i;
\end{align*}
here $g_{ij}$ is the $ij$-th entry of $G_\Acal$.
Therefore, the vector inequality $G_\Acal(u_0\cdots u_{n-1})\transpose\le(u_0\cdots u_{n-1})\transpose$ holds componentwise. As a matter of fact we must have vector equality, because if we had strict inequality in a component the scalar product with the left Perron vector $v$ of $G_\Acal$ 
---which is strictly positive by the Perron-Frobenius theory---
would give $vu\transpose=vG_\Acal u\transpose<vu\transpose$, which is impossible.

Thus, the nested open sets
\[
\bigcup\set{\Cbf_a[U_j]:a\in E_{ij}}\subseteq U_i
\]
have the same Lebesgue measure, and this implies that the second is contained in the closure of the first (because the intersection of the second with the complement of the closure of the first is an open set of Lebesgue measure $0$, and thus is empty).
Therefore
\[
\overline{U_i}\subseteq
\bigcup\set{\overline{\Cbf_a[U_j]}:a\in E_{ij}}=
\bigcup\set{\Cbf_a[\overline{U_j}]:a\in E_{ij}},
\]
and since the reverse inclusion is clear, we have indeed equality. By the uniqueness of the attractor, $H'_i$ equals $\overline{U_i}$ for every $i$, and thus is a regular closed set.

In order to establish our last statement, we reset each $U_i$ to be the interior of the corresponding $H'_i$ and prove that, for each pair $a\in E_{ij}$, $b\in E_{ik}$ with $a\not=b$, we have:
\begin{enumerate}
\item $\lambda\bigl(\Cbf_a[H'_j]\cap\Cbf_b[H'_k]\bigr)=0$.
\item $\Cbf_a[U_j]\cup\Cbf_b[U_k]\subseteq U_i$.
\item $\Cbf_a[U_j]\cap\Cbf_b[U_k]=\emptyset$.
\end{enumerate}
Let $h_i=\lambda(H'_i)>0$; since $H'_i=\bigcup\set{\Cbf_a[H'_j]:a\in E_{ij}}$, we have $(h_0\cdots h_{n-1})\transpose\le G_\Acal(h_0\cdots h_{n-1})\transpose$ componentwise. Arguing as above, we have indeed vector equality; this implies (1) since otherwise we would have strict inequality in at least a component. Statement (2) is clear. If (3) were false, then $\Cbf_a[H'_j]\cap\Cbf_b[H'_k]$ would contain a nonempty open set, and (1) would also be false.
\end{proof}

Let $\Acal$ be an abstract continued fraction, and choose unimodular intervals~$\set{I_i}$.
Beyond the already introduced d-g IFS $(\Gcal_\Acal,\set{B_a})$ 
on $\set{I_i}$, we 
consider the dual g-d IFS $(\Gcal_{\du{\Acal}},\set{D_a})$ on the set of complementary intervals $\set{I_iS}$. Here, given $i\sigma j\in\Acal$, the function $D_{i\sigma j}:I_iS\to I_jS$ is defined by 
$D_{i\sigma j}=B_{i\sigma j}\m=(I_jS)\Proj(\du{\sigma})(I_iS)\m$ and, unless $\sigma$ equals $\epsilon$ or $f$, maps the domain interval to a proper subinterval of the target.

\begin{remark}
We\label{ref30} are labeling the maps $D_a$ using the edges  of $\Gcal_\Acal$, rather than those of $\Gcal_{\du{\Acal}}$; this leads to the annoying reverse of direction in the indices of the domain and target intervals. However the price is worth paying, because we can now use the same alphabet $a,b,a_0,a_1,\ldots\in\Acal$ for both IFS; note that $B_{a_0\ldots a_{p-1}}\m=D_{a_{p-1}\ldots a_0}$.
We define the letter $a$ to be \newword{parabolic} if it is of the form $a=il^ki$ or $a=in^ki$ for some node $i$ and exponent $k\ge1$. The name is justified because $B_a$ and $D_a$ are then parabolic matrices, that is, fix precisely one point of $\PP^1\Rbb$.
\end{remark}

According to the above conventions, an infinite path $\abf=a_0a_1\ldots\in(\du{E})^\omega$ is a path in $\Gcal_\Acal$ that follows reverse edges. Thus each $a_t$ has the form $i_{t+1}\sigma_t i_t$, and $\pi(\abf)$ is the intersection of the chain
\begin{equation*}
I_{i_0}S\supseteq D_{a_0}I_{i_1}S\supseteq
D_{a_0 a_1}I_{i_2}S\supseteq\cdots.
\end{equation*}
We let $\bigcupdot_iK_i$ be the attractor of the dual IFS, with $K_i=\pi[({\du{E}_i})^\omega]\subseteq I_iS$; all of our previous discussion, and notably Theorem~\ref{ref28}, applies. 

\begin{definition}
Let\label{ref16} $\Acal$ be an abstract continued fraction and choose unimodular intervals $\set{I_i}$.
The attractor $\bigcupdot_iH_i$ of the g-d IFS $(\Gcal_\Acal,\set{B_a})$ 
is both the domain and the image of a multivalued map $\Fcal$ that is conjugate to the shift $S$ on $E^\omega$ via~$\pi$: explicitly, if $x=\pi(\abf)$ with $a_0=i_0\sigma_0i_1$, then we set $\Fcal(i_0,x)=(i_1,\pi(S\abf))$, and say that $B_{a_0}\m$ \newword{acts} on $x$ (indeed, $\pi(S\abf)=B_{a_0}\m(x)$).
Such maps are sometimes called cookie-cutters~\cite{rand89}, \cite{bedford91}; the attractor of a contractive IFS amounts then to the repeller of the corresponding cookie-cutter.
We have multivaluedness because~$\pi$ is not $1$--$1$ everywhere; however it is a mild one, as testified by Remark~\ref{ref20}(4) and Theorem~\ref{ref17}.

Let now $\gamma:\bigcupdot_iH_i\to\bigcup_iH_i$ be the map that glues all $H_i$ to the same copy of $\PP^1\Rbb$, and suppose that the following condition holds:
\begin{itemize}
\item[(i)] For every $x\in H_i$ and $x'\in H_j$ such that $\gamma(x)=\gamma(x')$, we have
\[
\gamma\set{B_a\m(x):\text{$B_a\m$ acts on $x$}}=
\gamma\set{B_b\m(x'):\text{$B_b\m$ acts on $x'$}}.
\]
\end{itemize}
This means that the set of $\Fcal$-images of $(i,x)$ equals, up to gluing by $\gamma$, the set of $\Fcal$-images of $(j,x)$. If this happens then $\Fcal$ descends to a Gauss-type multivalued map $F$ on $\bigcup_iH_i$, and we say that the family $\set{I_i}$ constitutes a \newword{realization} of $\Acal$.

All of this can be dualized in the obvious way, and we have a dual multivalued map $\du{\Fcal}$ on $\bigcupdot_iK_i$.
If, at the same time, $\set{I_i}$ is a realization of $\Acal$ and $\set{I_iS}$ a realization of $\du{\Acal}$ (in short, if both $F$ and $\du{F}$ are well defined), then we say that $\set{I_i}$ is a
\newword{geometric} realization of $\Acal$.
\end{definition}

\begin{remark}\label{ref34}
The reader expert in continued fractions will have realized that presenting the pair of dual maps $(\Fcal,\Fcal^\sharp)$ amounts to presenting the natural extension of $\Fcal$. We will not introduce the natural extension, because it is not needed in our proofs, and we already presented much machinery. However, a few words may clarify the situation for the expert; other readers may safely skip the present remark.

The very definition of natural extension refers to the category ---in the sense of category theory--- in which we put ourselves. If we work in the category of measure-preserving systems, then we have much freedom in discarding sets of measure~$0$; this is perfectly fine when seeking, e.g., for absolutely continuous invariant measures.
However, when interested in periodicity issues we cannot discard sets so easily and we have to take into account the topology of the situation. Consider, for example, the tent map on $[0,1]$ and the doubling map on ${[0,1)}\pmod1$. 
With respect to the invariant Lebesgue measure, they
are measure-theoretically the same system, and have the same natural extension. However, in the category of topological dynamical systems they are different, and have completely different natural extensions. In our setting, the dual pair $(\Fcal,\Fcal^\sharp)$ describes the natural extension of $\Fcal$ \emph{without discarding any information about periodic orbits}. The fact ---that may or may not hold--- that the abstract continued fraction $\Acal$ has a geometric realization means that the pair $(\Fcal,\Fcal^\sharp)$ descends to a pair $(F,F^\sharp)$ of maps on subsets of $\PP^1\Rbb$ that nicely code \emph{all} periodic geodesics through a certain section of a hyperbolic surface.
\end{remark}

\begin{remark}\label{ref20}
\begin{enumerate}
\item Every abstract continued fraction has realizations: for example, one can always take pairwise disjoint intervals in the same copy of $\PP^1\Rbb$.
However, unless $n=1$, these realizations are rarely geometric; the key issue here is that we have freedom in choosing the intervals $\set{I_i}$, but once a choice has been made the intervals $\set{I_iS}$ are immutable.
As a matter of fact, we are not claiming that every $\Acal$ has a geometric realization.
\item The notion of a geometric realization is the turning point at which the projective world diverges from the affine one, for the good reason that in the latter no reasonable definition of complementary interval can be given.
\item In all cases treated in this paper a procedure to determine, given $(j,y)$, if $y$ belongs to $H_j$ is readily provided. This yields an easy way of computing~$\Fcal$: on input $(i,x)$ one simply computes the set
\[
\bigl\{(j,B_a\m(x)):a\in E_{ij}\bigr\},
\]
and filters it according to membership in $\bigcupdot_iH_i$. This is not worse than a standard definition by cases.
The version for $\du{\Fcal}$ requires computing
\[
\bigl\{(j,B_a(x)):a\in E_{ji}\bigr\},
\]
and filtering for membership in $\bigcupdot_iK_i$.

If such a procedure is not available (a good case is in~\cite[Example~4.5]{panti22}), then one may still filter by membership in $\bigcupdot_iI_i$ and proceed with the computation. 
Now we have no guarantee that, at any given step, the image set is correct; however, the spurious elements will eventually be filtered out at later steps.
\item If for every $i$ and every pair $a\in E_{ij}$, $b\in E_{ik}$ with $a\not=b$ the intersection $B_a[H_j]\cap B_b[H_k]\subseteq H_i$ is countable, then all $\Fcal$-orbits except at most countably many are ordinary single-valued ones. This is the case for all maps treated in this paper, and it would be interesting to determine sufficient conditions.
\item As usual, we see a sequence $\abf\in E_i^\omega$ as the \newword{symbolic orbit} of the $\Fcal$-orbit of the point $(i,\pi(\abf))$. If $\abf$ is purely periodic, that is of the form $\abf=(a_0\ldots a_{p-1})^\omega$, with $p\ge1$ minimal with this property, then we say that $a_0\ldots a_{p-1}$ is the \newword{symbolic period} of the orbit.
\item The issue of defining Gauss-type maps at discontinuity points is recurrent in the literature, even in the single-interval setting, and various ways of dealing with it have been proposed, such as devising a selection procedure, or leaving maps undefined at discontinuity points. This is usually harmless in a measure-theoretic context, but not so in ours; see, e.g., Example~\ref{ref32}.
Thus, in Definition~\ref{ref16} we 
accept multivalued maps, but require that the multivaluedness be intrinsic to the combinatorial structure, not the result of a weird realization.
\item Changing $I_i$ to $I_i'$ amounts to transforming $H_i$ and $K_i$ to $I_i' I_i\m[H_i]$ and $I_i' I_i\m[K_i]$, respectively, the other components of the attractors being unaffected.
\item We recap the direction of arrows:
\begin{itemize}
\item The branches $B_{i\sigma j}\m$ of $F$, the maps $D_{i\sigma j}$ of the dual IFS, and the $F$-symbolic sequences follow the direction $i\to j$ of edges in $\Gcal_\Acal$.
\item The branches $D_{i\sigma j}\m$ of $\du{F}$, the maps $B_{i\sigma j}$ of the base IFS, and the $\du{F}$-symbolic sequences go against the direction of edges in $\Gcal_\Acal$.
\end{itemize}
\end{enumerate}
\end{remark}

The following combinatorial fact is our main technical tool.

\begin{theorem}
Let\label{ref17} $\Acal$ be an abstract continued fraction, let $\set{I_i}$ be unimodular intervals, and $(\Gcal_\Acal,\set{B_a})$ the graph-directed IFS thus determined.
\begin{enumerate}
\item There exists a number $M\ge1$, depending on $\Acal$ only, such that for every $x_0\in H_{i_0}$ the cardinality of the fiber $\pi\m(x_0)\subseteq E_{i_0}^\omega$ is bounded by $M$.
\item If the fiber $\pi\m(x_0)$ contains a purely periodic sequence, then it has cardinality $1$.
\end{enumerate}
\end{theorem}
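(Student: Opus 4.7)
I would prove (2) first, by a rigidity argument on the fixed-point structure of $B_w$. Suppose $\abf=w^\omega\in\pi\m(x_0)$ with $w=w_0\cdots w_{p-1}$ primitive of length $p\ge 1$. Then $B_w(x_0)=x_0$, and by Lemma~\ref{ref27} the nested intervals $B_w^n[I_{i_0}]$ strictly decrease to $\set{x_0}$; in particular $B_w$ has infinite order, so as an element of the extended modular group it is either hyperbolic (two real quadratic-irrational fixed points) or parabolic (one rational cusp). The endpoints of the subintervals $B_a[H_j]$ partitioning $H_{i_0}$ are rational cusps of the unimodular structure, and Theorem~\ref{ref28} makes them pairwise interior-disjoint. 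In the hyperbolic case $x_0$ is irrational, hence in the interior of a unique such subinterval, which must be $B_{w_0}[H_{i_1}]$; so $a_0=w_0$ is forced. In the parabolic case the nested contraction $B_w^n[I_{i_0}]\to\set{x_0}$ forces $x_0$ to be an endpoint of $I_{i_0}$, and at such an extreme endpoint only one interior-disjoint subinterval of $H_{i_0}$ can touch, so again $a_0=w_0$. Applying $B_{w_0}\m$ reduces the situation to the cyclic conjugate $(w_1\cdots w_{p-1}w_0)^\omega\in\pi\m(x_1)$, and induction shows $\abf$ is determined.

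\textbf{Proof of (1).} Part (1) would then be deduced by combining (2) with pigeonhole. Let $V^{\mathrm{int}}$ be the (finite) set of \emph{internal} boundary points: those $y\in\partial H_i$ that lie in two distinct subintervals $B_a[H_j]$ and $B_b[H_k]$ of $H_i$. A sequence $\abf\in\pi\m(x_0)$ can split into distinct admissible continuations only at times $t$ where $x_t:=B_{a_0\cdots a_{t-1}}\m(x_0)$ belongs to $V^{\mathrm{int}}$. The key observation is that the $x_t$ at those branching times are pairwise distinct: if $x_{t_1}=x_{t_2}=v$ with $t_1<t_2$, then $u:=a_{t_1}\cdots a_{t_2-1}$ is a cycle at the common node with $B_u(v)=v$, so Lemma~\ref{ref27} applied to $u^\omega$ gives $\pi(u^\omega)=v$; Part~(2) then forces $|\pi\m(v)|=1$, contradicting the very branching property of $v\in V^{\mathrm{int}}$. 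Each $\abf\in\pi\m(x_0)$ therefore undergoes at most $|V^{\mathrm{int}}|$ binary splits, and $\pi\m(x_0)$ is the set of leaves of a tree of branching factor $\le 2$ with splitting-depth bounded by $|V^{\mathrm{int}}|$ on every branch, giving $|\pi\m(x_0)|\le 2^{|V^{\mathrm{int}}|}=:M$.

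\textbf{Expected main obstacle.} The decisive non-trivial step is the use of (2) as a Ping-Pong-style obstruction inside the proof of (1): it is not circular because (2) is invoked on a fresh point $v$ and on a shorter purely periodic sequence $u^\omega$, so (2) can be legitimately applied before (1) is known at $x_0$. The secondary, more bookkeeping-heavy difficulty is justifying finiteness of $V^{\mathrm{int}}$; this should follow from the regular-closed conclusion of Theorem~\ref{ref28} and the fact that each $H_i$ is a finite union of closed unimodular subintervals (itself a consequence of the finite code structure), but it deserves careful checking before one can invoke the purely combinatorial tree-counting that finishes the argument.
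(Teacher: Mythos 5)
Your approach is genuinely different from the paper's, which treats both parts purely combinatorially: part (1) is proved by building a finite nondeterministic transducer $\Tcal_\Acal$ (whose states are prefixes and dual prefixes of letters of $\Acal$) and observing that the code property forbids two tokens from occupying the same state, giving a bound $M=$ number of states; part (2) adapts the Devolder--Latteux--Litovsky--Staiger pigeonhole argument for $\omega$-words over a code, showing $a_0=b_0$ directly in $\widetilde\Sigma$. Your proposal instead reasons geometrically on the attractor. Unfortunately there are serious gaps.

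For part~(1), your argument stands or falls with finiteness of the branching set $V^{\mathrm{int}}$, and this is false in general. First, the definition is too restrictive: branching occurs at any point belonging to two distinct pieces $B_a[H_j]$, $B_b[H_k]$, and such points need not lie in $\partial H_i$ --- in the dual system of~\S\ref{ref13} the overlap $A\cap B$ sits in the interior of $K_0\cap[\infty,-2]$. Second, even after the definition is corrected, the set of overlap points is typically countably infinite, not finite: the components $K_i$ are countable unions of closed intervals, and, as computed after Lemma~\ref{ref35}, the images $D_q[K_0]$ and $D_s[K_1]$ cover $[\infty,-2]$ with countably infinitely many interval endpoints in common. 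Your ``distinctness of visited branching points'' observation is correct, but with an infinite branching set it yields no bound on the number of splits an orbit can suffer, and $2^{|V^{\mathrm{int}}|}$ is not a number. Moreover, even where it would succeed your $M$ depends on the chosen realization $\set{I_i}$, whereas the statement requires $M$ to depend on $\Acal$ alone; the transducer achieves exactly this, since its state set is built from $\Acal$.

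For part~(2), your hyperbolic/parabolic dichotomy is a reasonable idea but is not airtight as written. Irrationality of $x_0$ does not put it in the interior of a unique $B_a[H_j]$: in the $(\tau-1)$-example, $\tau-2$ is a quadratic irrational that is a boundary point of $H_0$ (and of $B_q[H_0]$), not an interior point of anything. To conclude uniqueness of $a_0$ you would need pairwise interior-disjointness of the closures $B_a[H_j]$ \emph{and} that their pairwise intersections are rational; the latter does not follow from the Open Set Condition alone, since the open sets $U_i$ coming out of the Weak Separation argument need not have rational boundary. The parabolic-endpoint subcase has a similar unproved step (``only one interior-disjoint subinterval can touch'' at an endpoint of $I_{i_0}$). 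The paper avoids all of this by arguing inside $\widetilde\Sigma$: write the period as $i_0wi_0$, pigeonhole on the finitely many pairs (node, prefix of $w$) to get $b_0\cdots b_{k-1}=(a_0\cdots a_{p-1})^s\,b_0\cdots b_{h-1}$, and invoke the code property --- no geometry needed. I would recommend either adopting that combinatorial route or at minimum supplying a proof that each $H_i$ is a countable union of closed unimodular intervals and that distinct $B_a[H_j]$ meet only at rationals, neither of which is established in the material you were given.
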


Theorem~\ref{ref17} guarantees that the multivaluedness of our maps $\Fcal,\Fcal^\sharp$ is not only algorithmically treatable (see Remark~\ref{ref20}(3)), but definitely limited: no point has more than $M$ forward orbits. Moreover, purely periodic points will never give any trouble: if one of these orbits happens to be purely periodic, then there are no other orbits at all.
We postpone the proof of Theorem~\ref{ref17} after providing two overdue examples.

\section{Two examples}\label{ref13}

Our first example on the Farey map is an easy warmup for much more demanding second one.

The Farey map is the realization of the abstract continued fraction $\Acal=\set{0l0,0nf0}$ obtained by taking $I_0=[0,1]$, which identifies with the matrix $L$ according to our conventions. Since only one node is involved, this is automatically a geometric realization. Note indeed that in the one-node case all realizations of the same $\Acal$ are clearly conjugate, whereas in the multiple-node case different realizations may not be conjugate; see Remark~\ref{ref36} on our second example. Setting for short $a=0l0$ and $b=0nf0$, we have $B_a=LLL\m=L$ and $B_b=LNFL\m=\bbmatrix{1}{-1}{-1}{}$. Since
\[
B_a[I_0]\cup B_b[I_0]=[0,1/2]\cup[1/2,1]=I_0,
\]
the fixed point of the Hutchinson operator is $H_0=I_0$, and $F$ is the well known Farey map, induced by $B_a\m$ on $[0,1/2]$ and by $B_b\m$ on $[1/2,1]$.

Now, according to the definition before Remark~\ref{ref30}, we have $D_a=(I_0S)\Proj(l^\sharp)(I_0S)\m=LSN(LS)\m=L\m$ and
$D_b=(I_0S)\Proj\bigl((nf)^\sharp\bigr)(I_0S)\m=LSNF(LS)\m=
\bbmatrix{1}{-1}{-1}{}$.
The Hutchinson operator for $(\Gcal_{\du{\Acal}},\set{D_a,D_b})$ immediately shrinks the complementary interval $[1,0]$ to $[\infty,0]$, and then fixes the latter; therefore the dual attractor is $K_0=[\infty,0]$, and Theorem~\ref{ref3} yields Corollary~\ref{ref4}(1). Moreover, $F^\sharp$ is induced by $D_a\m$ on $[-1,0]$ and by $D_b\m$ on $[\infty,-1]$.

As it is well known, Schweiger's jump transformation
(see~\cite[Chapter~18]{schweiger95}; we will treat the construction in more detail in~\S\ref{ref26}) accelerates the Farey map to the Gauss one.
Indeed, let $J=I_0\setminus[0,1/2]$ be the complement of the set of points on which $F$ is defined by the parabolic branch $B_a\m$.
Setting $e(x)=\min\set{t\ge0:F^t(x)\in J}$ then $F_{\jump}$, defined by $F_{\jump}(x)=F^{e(x)+1}(x)$ is the Gauss map. 
In Theorem~\ref{ref31} we will show that, in cases such as the present one, 
$F_{\jump}$ is dual to the acceleration $F^\sharp_R$ of $F^\sharp$ on the image $R=D_b[K_0]$ of $K_0$ under the non-parabolic letters. In this case $D_b[K_0]=[\infty,-1]$, and it is readily seen that $F^\sharp_R$ is conjugate to $F_{\jump}$ via the involution $x\mapsto x\m$; we thus recover Galois's Theorem~\ref{ref5}.

Our second example is much more involved, but has the advantage of presenting all the difficulties we have to face when multiple intervals are involved, in particular the possibility of attractors $H_i$ strictly contained in $I_i$, of large overlappings of the various $I_iS$, of overlappings at infinity, and of non-geometric realizations. Its combinatorial structure will reappear in Example~\ref{ref39}.

Let $n=2$ and $\Acal=\set{0n0, 1l1, 0nl0, 0nnl1, 1nf0, 1lnf1}$,
whose elements are denoted $o,p,q,r,s,t$, in this order; the letters $o$ and $p$ are parabolic.
The graph $\Gcal_\Acal$ is displayed in Figure~\ref{fig1}.
\begin{figure}[ht]
\begin{tikzpicture}[scale=1.6]
\node (0) at (0,0)  []  {$0$};
\node (1)  at (2,0) []  {$1$};
\path
(0) edge[Nedge,out=110,in=160,loop] node[left] {$q=nl$}
(0) edge[Nedge,out=-160,in=-110,looseness=8,loop] node[left] {$o=n$}
(0) edge[Nedge,out=-30,in=-150] node[below,inner sep=1pt] {$r=nnl$} (1)
(1) edge[Nedge,out=-70,in=-20,loop] node[right] {$p=l$}
(1) edge[Nedge,out=20,in=70,looseness=8,loop] node[right] {$t=lnf$}
(1) edge[Nedge,out=150,in=30] node[above,inner sep=1pt] {$s=nf$} (0);
\end{tikzpicture}
\caption{The graph $\Gcal_\Acal$; $s=nf$ is shorthand for $s=1nf0$, and similarly for the other edges}
\label{fig1}
\end{figure}
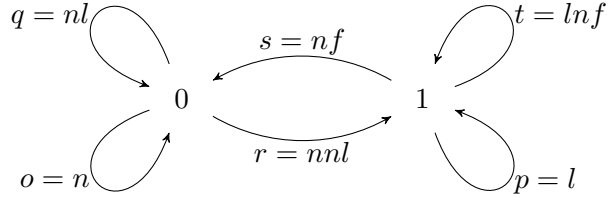

We have $G_\Acal=8\m\ppmatrix{6}{1}{4}{6}$, which has eigenvalue $1$ with right eigenvector $(1\;2)\transpose$, the other eigenvalue being $1/2$. We fix the intervals $I_0=[-1,0]$ and $I_1=[0,1]$, corresponding to $\bbmatrix{}{-1}{1}{1}$ and $L$, respectively.
Our formalism reduces the computation of the matrices $B_{i\sigma j}$ to converting lowercase to uppercase and replacing nodes with interval matrices. For example, $B_{1nf0}$ equals $I_1NFI_0\m=\bbmatrix{}{1}{1}{2}$.
It is instructive to present the conjugated affine IFS $(\Gcal_\Acal,\set{\Cbf_a})$ on the same intervals; letting $\Ibf_i$ be the only affine order-preserving map that sends $[0,1]$ to $I_i$, we have 
$\Cbf_{1nf0}=\Ibf_1\Nbf\Fbf\Ibf_0\m(x)=-x/2+1/2$. As $I_0$ and $I_1$ overlap only at $0$, we can easily draw the two g-d IFS, see Figure~\ref{fig2}.
\begin{figure}[ht]
\includegraphics[width=5cm]{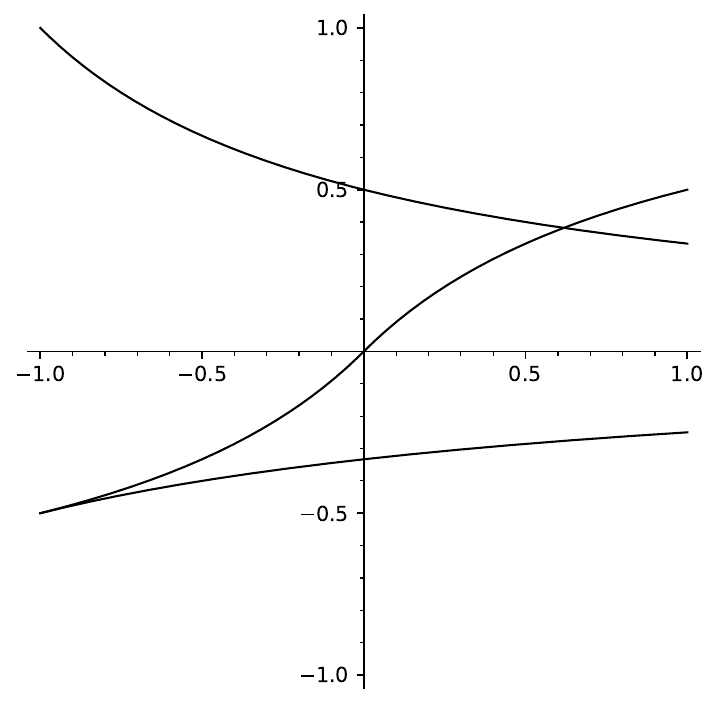}
\hspace{0.6cm}
\includegraphics[width=5cm]{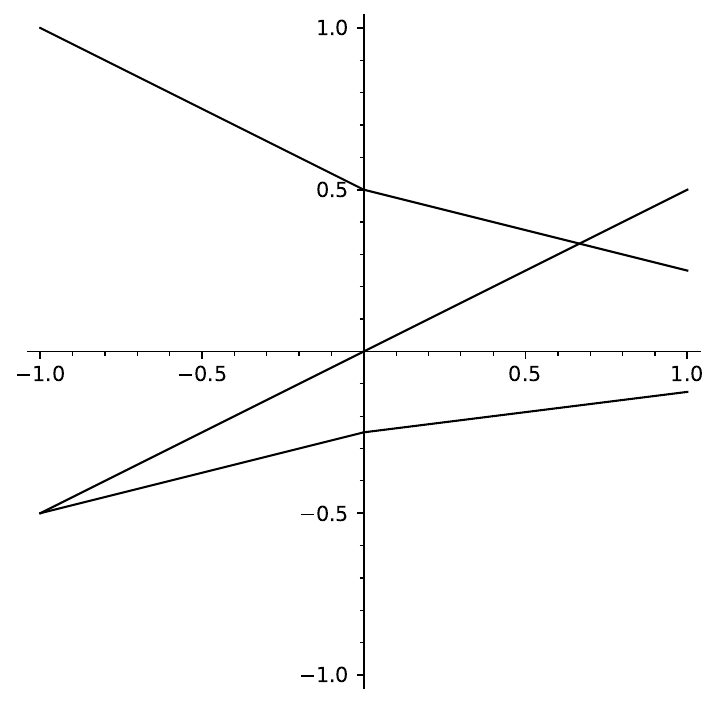}
\caption{The projective and the affine realizations of $\Acal$}
\label{fig2}
\end{figure}

The six maps in $\set{B_a:a\in\Acal}$ match in pairs, so the end result appears to be an ordinary IFS. However, this is somehow misleading: $B_o$ and $B_p$ match at $(0,0)$ as functions, but clearly they are different parabolic matrices.
On the other hand, $B_s$ and $B_t$ match at $(0,1/2)$ and are the same matrix $\bbmatrix{}{1}{1}{2}$; analogously for $B_q$ and~$B_r$.

The attractor of $(\Gcal_\Acal,\set{B_a})$ is easily guessed and verified to be $[\tau-2,0]\cupdot[0,\tau-1]$, with $\tau$ the golden ratio, while the attractor of $(\Gcal_\Acal,\set{\Cbf_a})$ is $[-1/3,0]\cupdot[0,2/3]$. Note that, indeed, the Minkowski homeomorphism sends $\tau-1$ to $2/3$, while its conjugate by 
translation-by-$1$ sends $\tau-2$ to $-1/3$.
Note also that the ratio of the Lebesgue measures of $[-1/3,0]$ and $[0,2/3]$ is $1/2$, which fits with the $1$-eigenvector of $G_\Acal$ being $(1\;2)\transpose$.
Finally, note that taking $U_0=(\tau-2,0)$ and $U_1=(0,\tau-1)$, we see directly that $(\Gcal_\Acal,\set{B_a})$ satisfies the Open Set Condition so that, by Lemma~\ref{ref14}, $\Acal$ and $\du{\Acal}$ are indeed codes and abstract continued fractions.

Since $H_0=[\tau-2,0]$ and $H_1=[0,\tau-1]$ intersect only at $0$, which is fixed by both the matrices $B_o\m$ and $B_p\m$ acting on it, this is a realization of $\Acal$.
The graph of the Gauss-type map $F$ on $H_0\cup H_1$ is thus obtained by reflecting Figure~\ref{fig2} left along the diagonal, and restricting it to $[\tau-2,\tau-1]^2$. 
The resulting map, shown in Figure~\ref{fig3} left, may look unfamiliar, but becomes familiar once we apply the jump transformation to it. Indeed, let $J=[(\tau-3)/5,-\tau+2]^c$ be the complement of the set of points on which $F$ is defined by the two parabolic branches.
Then $F_{\jump}$, defined as in the example of the Farey map and shown in Figure~\ref{fig3} right, is nothing else than the $\alpha$-continued fraction of~\cite{nakada-natsui08}, \cite{kraaikamp-schmidt-steiner12}, \cite{carminati-tiozzo13}, for $\alpha=\tau-1$.
\begin{figure}[ht]
\includegraphics[width=5cm]{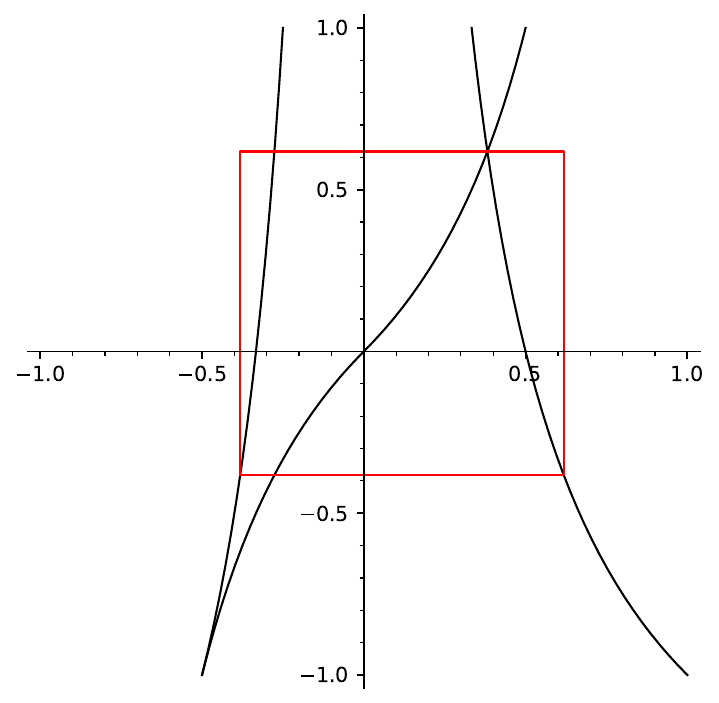}
\hspace{0.6cm}
\includegraphics[width=5cm]{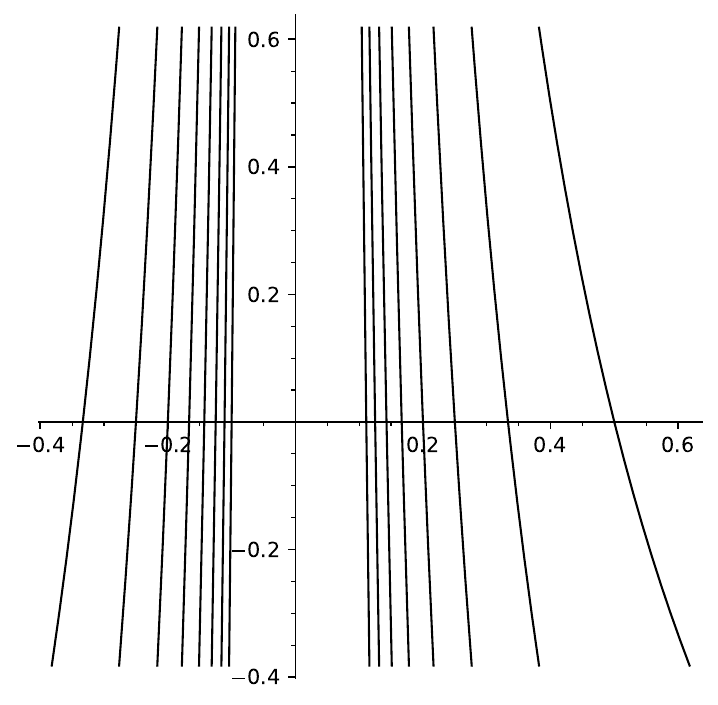}
\caption{Left: the graph of $F$. The branches, listed from left to right in the order they touch the $x$-axis, are $B_q\m,B_r\m,B_o\m,B_p\m,B_t\m,B_s\m$. Right: the graph of $F_{\jump}$}
\label{fig3}
\end{figure}

We now discuss $\du{\Acal}$, which is more delicate. The intervals $I_0S=[0,-1]$ and $I_1S=[1,0]$ intersect in $[1,-1]\cup\set{0}$; thus, working directly with them is inconvenient.
It is much simpler to use the approach of Remark~\ref{ref21} to compute the attractor of the g-d IFS determined by $\du{\Acal}$ on two disjoint copies of $[0,1]$, and then apply Remark~\ref{ref20}(7). After some practice the process is automatic; we present the end result in the following lemma, in a form that is adaptable to all cases concerning us.

\begin{lemma}
Define\label{ref35}
\begin{align*}
R_0& =[\infty,-2],& R_1& =[\infty,-2],\\
K_0& =\bigcup_{t\ge0}D_o^tR_0\cup\set{0},&
K_1& =\bigcup_{t\ge0}D_p^tR_1\cup\set{0}.
\end{align*}
Then we have
\begin{align}
R_0&=D_qK_0\cup D_sK_1,& R_1&=D_rK_0\cup D_tK_1,\label{eq9}\\
K_0&=D_oK_0\cup D_qK_0\cup D_sK_1,&
K_1&=D_rK_0\cup D_tK_1\cup D_pK_1.\label{eq10}
\end{align}
By the last two equalities, the pair $(K_0,K_1)$ is a fixed point for the Hutchinson operator, and thus is the attractor of the g-d IFS $(\Gcal_{\du{\Acal}},\set{D_a})$.
\end{lemma}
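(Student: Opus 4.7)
The plan is to prove the lemma in three stages: verify \eqref{eq9} by a direct calculation, derive \eqref{eq10} from it together with the defining expressions for $K_0$ and $K_1$, and conclude by invoking the uniqueness of the attractor of the dual graph-directed IFS.

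First I would compute the six matrices $B_a$ from $B_{i\sigma j}=I_i\Proj(\sigma)I_j^{-1}$. One finds that $B_o,B_p$ are both parabolic with unique fixed point $0$, while $B_q=B_r=\bbmatrix{0}{-1}{1}{3}$ and $B_s=B_t=\bbmatrix{0}{1}{1}{2}$ (these coincidences were already observed in~\S\ref{ref13}). Since $D_r,D_t$ then act as Möbius transformations exactly as $D_q,D_s$ do, and $R_0=R_1$, the two equations of \eqref{eq9} collapse to the single identity $R_0=D_q K_0\cup D_s K_1$. A routine computation gives $D_o^n R_0=[1/n,\,2/(2n-1)]$ and $D_p^n R_1=[-1/n,\,-2/(2n+1)]$ for $n\ge1$, and applying the increasing map $D_q(x)=-3-1/x$ and the decreasing map $D_s(x)=-2+1/x$ term-by-term (with $D_q(0)=D_s(0)=\infty$) would yield
\[
D_q D_o^n R_0=[-n-3,\,-n-\tfrac{5}{2}],\qquad D_s D_p^n R_1=[-n-\tfrac{5}{2},\,-n-2]
\]
for every $n\ge0$. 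These two families share the endpoints $-n-\tfrac{5}{2}$, and concatenating their union $[-n-3,-n-2]$ over $n\ge0$ exhausts $(-\infty,-2]$; adjoining the common image $\{\infty\}$ of $0$ recovers $R_0$.

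Given \eqref{eq9}, equation \eqref{eq10} is purely algebraic: $D_o(0)=0$ gives $D_o K_0=\bigcup_{t\ge1}D_o^tR_0\cup\{0\}$, so $K_0=R_0\cup D_o K_0=D_o K_0\cup D_q K_0\cup D_s K_1$ by \eqref{eq9}; the parallel argument with $D_p(0)=0$ handles $K_1$. The sets $K_0,K_1$ are compact (each is a point together with a sequence of closed intervals accumulating at that point, plus one more closed interval $R_i$), so $(K_0,K_1)$ is a compact fixed point of the Hutchinson operator for the dual g-d IFS $(\Gcal_{\du{\Acal}},\set{D_a})$; by Remark~\ref{ref21} it is the attractor.

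The delicate point is the bookkeeping in step one: one must track that $\det D_s=-1$ reverses orientation while $\det D_q=+1$ does not, and verify that the endpoints $-n-\tfrac{5}{2}$ of the intervals $D_qD_o^nR_0$ and $D_sD_p^nR_1$ meet precisely, so that the two families together tile $(-\infty,-2]$ without gaps. Everything else is routine algebra.
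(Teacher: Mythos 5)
Your proof is correct, and the overall strategy matches the paper's: establish \eqref{eq9} by examining the images of the blocks $D_o^tR_0$ and $D_p^tR_1$, then derive \eqref{eq10} algebraically from the recursive description of $K_0,K_1$, and conclude via uniqueness of the compact fixed point of the Hutchinson operator (Remark~\ref{ref21}). The only real difference lies in how step one is organized. The paper first extracts the conjugation identities \eqref{eq11}, $N = B_q^{-1}B_oB_q = B_s^{-1}B_pB_s$ (a consequence of $B_q^{-1},B_s^{-1}$ sending the parabolic fixed point $0$ to $\infty$, with determinants $+1$ and $-1$ respectively), which give $D_qD_o^t = N^{-t}D_q$ and $D_sD_p^t = N^{-t}D_s$; this exhibits the relevant images at once as successive unit translates of the single block $D_qR_0\cup D_sR_1 = [-3,-2]$, so the tiling of $(-\infty,-2]$ is immediate. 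You instead compute each interval $D_qD_o^nR_0 = [-n-3,-n-5/2]$ and $D_sD_p^nR_1 = [-n-5/2,-n-2]$ directly from the M\"obius formulas and verify the tiling by hand. The two routes produce exactly the same intervals, and both are sound; the paper's version is marginally slicker and has the advantage that \eqref{eq11} is reused in the surrounding discussion (and anticipates the general mechanism described in Remark~\ref{ref33}), while yours is the plain termwise calculation. No gap either way.
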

\begin{proof}
The matrices $B_r\m$ and $B_q\m$ are equal and send $0$ to $\infty$ in an orientation-preserving way; analogously $B_s\m$ and $B_t\m$ are equal and send $0$ to $\infty$ in an orientation-reversing way. Therefore 
the matrix identities
\begin{equation}\label{eq11}
N=B_r\m B_oB_r=B_q\m B_oB_q=B_s\m B_pB_s=B_t\m B_pB_t
\end{equation}
hold (of course, they can also be checked explicitly). Remembering that $D_a=B_a\m$ for every $a\in\Acal$, we obtain
\begin{align*}
N\m D_r&=D_rD_o,&  N\m D_q&=D_qD_o,\\
N\m D_s&=D_sD_p,& N\m D_t&=D_tD_p.
\end{align*}
We now observe that $D_qR_0\cup D_sR_1=[-3,-5/2]\cup[-5/2,-2]=[-3,-2]$ and compute
\begin{equation*}
\begin{split}
D_qK_0\cup D_sK_1 &= D_q\Bigl[\bigcup_{t\ge0}D_o^tR_0
\cup\set{0}\Bigl]
\cup D_s\Bigl[\bigcup_{t\ge0}D_p^tR_1
\cup\set{0}\Bigl]\\
&= \bigcup_{t\ge0}N^{-t}D_qR_0
\cup\bigcup_{t\ge0}N^{-t}D_sR_1\cup\set{\infty}\\
&= \bigcup_{t\ge0}N^{-t}[-3,-2]\cup\set{\infty}\\
&= R_0.
\end{split}
\end{equation*}
An analogous manipulation shows the second identity in~\eqref{eq9}, and the identities in~\eqref{eq10} follow easily.
\end{proof}

We visualize the situation by reparametrizing the projective line as the border of the Poincar\'e disk with parameter the argument, ranging in $[-\pi/2,3\pi/2]$. Thus, in Figure~\ref{fig4} left the horizontal and vertical segments in the lower left corner range from $-\pi/2$ to 
the argument of the image of $-2$ under stereographic projection through~$i$, and represent $[0,-2]$, while those in the upper right corner range from $\pi/2$ to $3\pi/2$, 
and represent $[\infty,0]$. The inner rectangles show part of $\bigl(K_0\cup K_1\bigr)^2$; there are countably many more rectangles, accumulating to the borders.
\begin{figure}[ht]
\includegraphics[width=5.5cm]{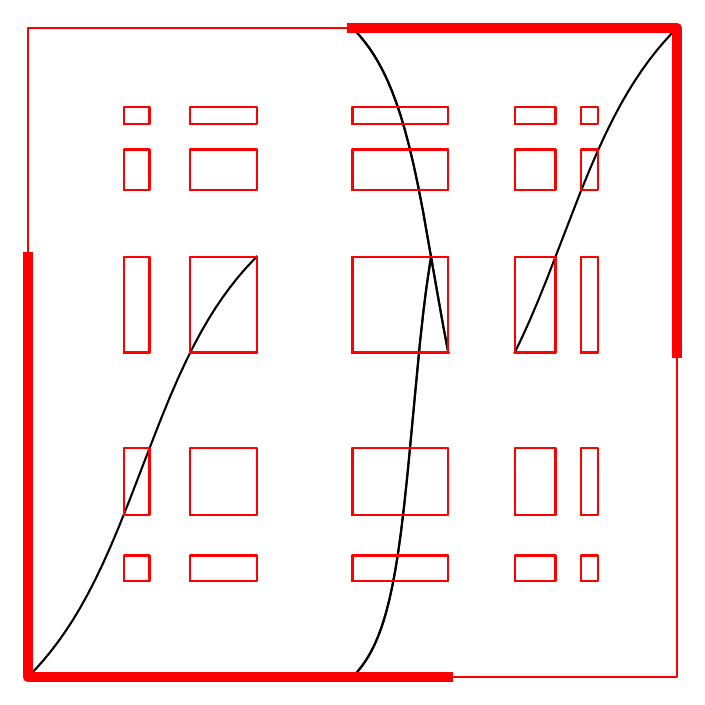}
\hspace{0.6cm}
\includegraphics[width=5.5cm]{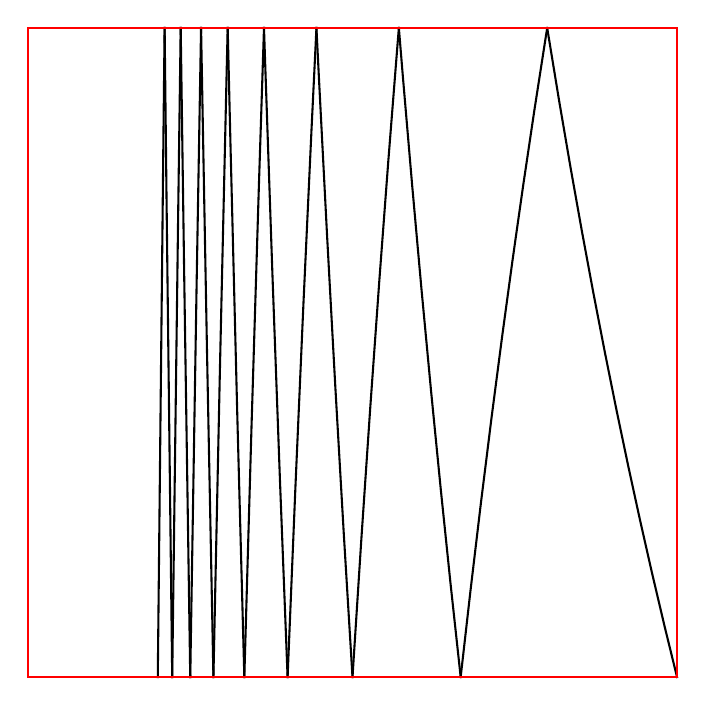}
\caption{Left: the graph of $\du{F}$. The increasing branches are $D_o\m$, $D_q\m=D_r\m$, and $D_p\m$, listed from left to right. The decreasing one is $D_s\m=D_t\m$. Right: the first-return map $\du{F}_{[\infty,-2]}$}
\label{fig4}
\end{figure}

For $i\sigma j$ ranging in $\Acal$, the branches of $\du{F}$
\begin{multline*}
\bigl\{\bigl(x,D_{0\sigma j}\m(x)\bigr):0\sigma j\in\Acal\text{ and }x\in D_{0\sigma j}[0,-2]\bigr\}\\
\cup
\bigl\{\bigl(x,D_{1\sigma j}\m(x)\bigr):1\sigma j\in\Acal\text{ and }x\in D_{1\sigma j}[\infty,0]\bigr\}
\end{multline*}
are also shown (we are talking as if we already knew that we are dealing with a geometric realization; this simplifies things and will be verified in the next paragraph).
Only four branches appear, rather than six, because, as noted in discussing Figure~\ref{fig2}, the four nonparabolic matrices collapse in pairs.
The map $\du{F}$ is piecewise-defined by these four branches.
Only $D_o\m=L\m$ acts on $K_0\setminus[\infty,2]$, fixing $0$ and pushing each $D_o^t[\infty,-2]$, for $t\ge1$, to $D_o^{t-1}[\infty,-2]$; analogously only $D_p\m=L$ acts on $K_1\setminus[\infty,2]$.
On the other hand, the two nonparabolic branches act on $[\infty,-2]$, clearly in an alternating way.

In order to show that we are dealing with a geometric realization, so that $\du{F}$ is indeed well defined,
we have to check the points in $K_0\cap K_1=[\infty,-2]\cup\set{0}$; as for $F$, the point $0$ does not give trouble. On the other hand,
as shown in the proof of Lemma~\ref{ref35}, $[\infty,-2]$ is the union of the two sets
\begin{equation*}\label{eq1}
\begin{split}
D_q[K_0]&=D_r[K_0]
=\bigcup_{t\ge0}N^{-t}[-3,-5/2]\cup\set{\infty}=A,\\
D_s[K_1]&=D_t[K_1]
=\bigcup_{t\ge0}N^{-t}[-5/2,-2]\cup\set{\infty}=B.
\end{split}
\end{equation*}
If $x$ belongs to the interior of $A$, then its $\du{F}$-value equals $D_q\m(x)$ if we consider $x$ to sit in $K_0$, and equals $D_r\m(x)$ ---which is the same number--- if we consider $x$ to sit in $K_1$; thus $\du{F}$ is single-valued at $x$, and similarly if $x$ belongs to the interior of $B$, or equals one of the two points $\infty,-5/2$ at which the two branches meet. On the other hand, if $x$ belongs to $A\cap B\setminus\set{\infty,-5/2}$
then $\du{F}$ is double-valued at $x$, but this ambiguity does not depend on our considering $x$ in $K_0$ or in $K_1$. Indeed, if $x$ is thought in $K_0$, then its two distinct images are $D_q\m(x)$ and $D_s\m(x)$, while if it is thought in $K_1$ it has images $D_t\m(x)$ and $D_r\m(x)$, the same pair of numbers.

Again, the map $\du{F}$ is not as weird as it appears. Let $\du{F}_{[\infty,-2]}$ be the induced first-return map on $[\infty,-2]$, namely $\du{F}_{[\infty,-2]}(x)=(\du{F})^{q(x)}(x)$, with $q(x)=\min\set{t\ge1:(\du{F})^t(x)\in[\infty,-2]}$. 
This is an ordinary single-valued map, because the possibly distinct $\du{F}$-images of the same point glue together when coming back to $[\infty,-2]$; see Figure~\ref{fig4} right.
Theorem~\ref{ref22} will show that  
$\du{F}_{[\infty,-2]}$ is dual to $F_{\jump}$.
On the other hand, by
direct inspection one sees that $\du{F}_{[\infty,-2]}$ is conjugate via $x\mapsto -x\m$ to a familiar map, namely 
the folded version on $[0,1/2]$ of the Nearest Integer continued fraction
\[
x\mapsto\bigl\vert x\m-\text{(the integer nearest to $x\m$)}\bigr\vert.
\]

\begin{remark}
Suppose\label{ref36} we change $I_0=[-1,0]$ to $I_0'=[0,1]=NI_0=I_1$; as noted in Remark~\ref{ref20}(7), the new attractors are $(H_0',H_1)$ and $(K_0',K_1)$, with 
$H_0'=NH_0=[\tau-1,1]$ and $K_0'=NK_0$.
The intersection of $H_0'$ and $H_1$ is now $\set{\tau-1}$, but no issue arises, $F$ remains well defined (see Figure~\ref{fig8}) and we still have a realization.
\begin{figure}[ht]
\includegraphics[width=5.5cm]{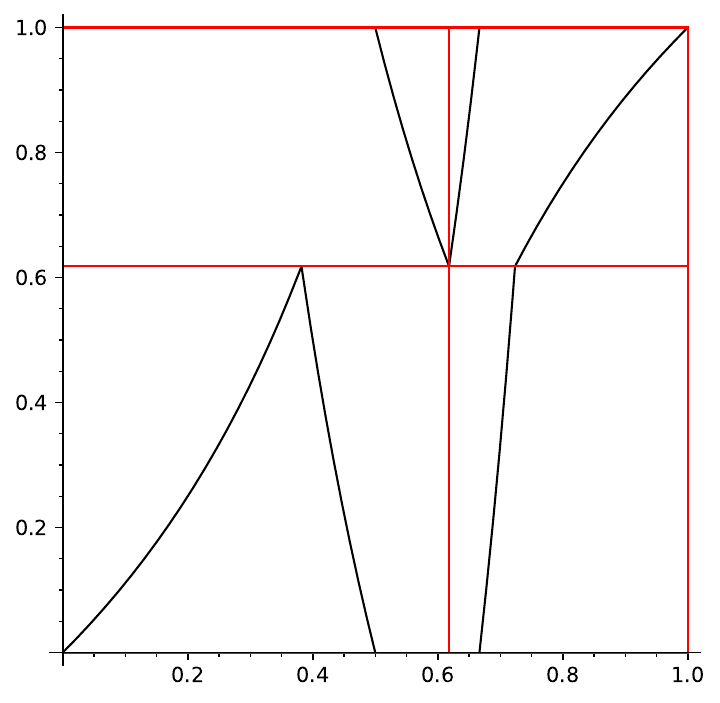}
\caption{A non-geometric realization of $\Acal$}
\label{fig8}
\end{figure}
However, let $\alpha$ be the real cube root of $13$ and consider the point $\alpha-49/9=-3.0931\ldots\in K_0'\cap K_1$. Applying the algorithm in Remark~\ref{ref20}(3), the 
$\du{\Fcal}$-orbit of $(0,\alpha-49/9)$ begins with
\begin{gather*}
(0, \alpha - 49/9),\\
(1, -729/54523\,\alpha^2 - 3240/54523\,\alpha - 14400/54523),\\
(1, -729/20314\,\alpha^2 - 2511/20314\,\alpha - 8649/20314),
\end{gather*}
while that of $(1,\alpha-49/9)$ with
\begin{gather*}
(1, \alpha - 49/9),\\
(1, -729/20314\,\alpha^2 - 2511/20314\,\alpha - 8649/20314),\\
(1, -729/1171\,\alpha^2 - 1782/1171\,\alpha - 4356/1171),
\end{gather*}
so there is a lag of one time step between the two orbits,
$\du{\Fcal}$ does not project to a Gauss-type map, and $I_0'=I_1=[0,1]$ is not a geometric realization of $\Acal$.
\end{remark}

One of the referees of this paper suggested that, at least for $\alpha$-continued fractions, the fact that a realization is or is not geometric should be related to matching~\cite{carminati_et_al10}, \cite{kalle_et_al20}. We agree, and consider the issue an interesting topic for further research.

\section{Proof of Theorem~\ref{ref17}}\label{ref24}

We start by proving (1).
The product operation in $\widetilde\Sigma$ can be extended to cover infinite products. Namely, given an infinite sequence 
$\abf=(i_0\sigma_0 i_1)(i_1\sigma_1 i_2)\ldots$ of elements of $\widetilde\Sigma$, in which every pair of consecutive terms can be multiplied, we consider, for every $t\ge1$, the product $i_0w_ti_t$ of the first~$t$ terms, in which the word $w_t$ over $\set{l,n,f}$ is simplified according to the rules (so that it contains at most one occurrence of $f$, precisely at the word end). It is then clear that, for $t$ going to infinity, the sequence $i_0w_ti_t$ converges positionwise to a uniquely determined sequence $i_0\zbf\in\set{0,\ldots,n-1}\times\set{l,n}^\omega$, which we regard as the infinite product of $\abf$ and denote by $\Phi(\abf)$.
The sequence $\zbf$ thus obtained is an infinite path in the graph with one node and edges $l,n$. Letting that node correspond to the unimodular interval $[0,\infty]$, the construction in Lemma~\ref{ref27} yields a point $\pi^*(\zbf)\in[0,\infty]$, and one easily proves that $\pi(\abf)=I_{i_0}\bigl(\pi^*(\zbf)\bigr)\in I_{i_0}$.

It is well known that for every $x\in[0,\infty]$ there exist precisely one (if $x$ is irrational, $0$ or $\infty$), or precisely two (if $x$ is rational different from $0$ and $\infty$), sequences $\zbf\in\set{l,n}^\omega$ such that $\pi^*(\zbf)=x$. Statement (1) of Theorem~\ref{ref17} will then follow once we prove that, given $i_0\zbf$, the number of paths $\abf\in E_{i_0}^\omega$ such that $\Phi(\abf)=i_0\zbf$ is uniformly bounded. We show this fact by constructing a nondeterministic transducer $\Tcal_\Acal$ that, on input $\zbf$ from node $i_0$, outputs in parallel the set $\set{\abf\in E_{i_0}^\omega:\Phi(\abf)=i_0\zbf}$. 
The construction of $\Tcal_\Acal$ from $\Acal$ is the $n$-nodes generalization of the $1$-node version introduced in~\cite[\S5]{panti18}, and works as follows. We let $z$ vary in $\set{l,n}$ and let ${}'$ be the bijection on $\set{l,n}^{<\omega}$ that exchanges $l$ with $n$ componentwise:
we also introduce $n$ new nodes $0',\ldots,(n-1)'$.
\begin{enumerate}
\item Given $i\sigma j\in\Acal$, we consider all \newword{splittings} $i\sigma j=(iu)(vj)$ such that $u$ is a possibly empty word in $\set{l,n}$ and $v$ is a nonempty word in $\set{l,n,f}$, containing at most one occurrence of $f$, necessarily at the end but not at the beginning. For example, $1nlf1$ has two splittings, $(1)(nlf1)$ and $(1n)(lf1)$.
We call $iu$ a \newword{prefix} and $i'u'$ its \newword{dual prefix}; the set of nodes of $\Tcal_\Acal$ is given by all prefixes and dual prefixes.
\item We add edges to $\Tcal_\Acal$ as follows.
\begin{itemize}
\item[(2$'$)] For each pair of prefixes of the form $iu$, $iuz$, we add a directed edge labeled $z$ from $iu$ to $iuz$, as well as one labeled $z'$ from $i'u'$ to $i'u'z'$.
\item[(2$''$)] For each splitting of the form $(iu)(zj)=i\sigma j\in\Acal$, we add an edge labeled $z\vert i\sigma j$ from $iu$ to $j$, as well as an edge labeled $z'\vert i\sigma j$ from $i'u'$ to $j'$.
\item[(2$'''$)] For each splitting of the form $(iu)(zfj)=i\sigma j\in\Acal$, we add an edge labeled $z\vert i\sigma j$ from $iu$ to $j'$, as well as an edge labeled $z'\vert i\sigma j$ from $i'u'$ to $j$.
\end{itemize}
\end{enumerate}

A little thinking shows ---and the formal proof in~\cite[Lemma 5.1]{panti18} for the $1$-node case extends without difficulties--- that feeding $\Tcal_\Acal$ with input $\zbf\in\set{l,n}^\omega$ from node $i_0$ produces all paths $\abf$ in $\Gcal_\Acal$ such that $\Phi(\abf)=i_0\zbf$. The action of $\Tcal_\Acal$ is straightforward: we put a token at node $i_0$ and start reading $\zbf=z_0z_1z_2\ldots$. If an edge labeled $z_0$ or $z_0\vert a$ leaves $i_0$ we move the token to the target node, outputting $a\in\Acal$ in case the edge is labeled $z_0\vert a$. 
We then check if an edge labeled $z_1$ leaves the new node, and repeat.
The transducer is nondeterministic, meaning that more than one edge with the same label may leave a node; in this case, we split the token as necessary and continue the computation in parallel.
If at some time $t$ a token sits in a state from which no edge labeled $z_t$ leaves, it disappears. The computation fails if all tokens eventually disappear; this means that $E^\omega\cap\Phi\m(i_0\zbf)=\emptyset$. Otherwise it runs forever, outputting $E^\omega\cap\Phi\m(i_0\zbf)$ in parallel.

The key issue now is that no node will ever host two tokens. Indeed, if so, then at some future time $t$ the two tokens will end up in a node labeled either $j$ or $j'$, and the words $w,w'\in\Acal^{<\omega}$ output by the tokens up to time $t$ will be different as words, but both equal either to $i_0z_0\ldots z_t j$ (if they end up at $j$) or to $i_0z_0\ldots z_tf j$ (if they end up at $j'$) as elements of $\widetilde\Sigma$, contradicting the fact that $\Acal$ is a code. Thus, for every $i_0\zbf$, the cardinality of $E^\omega\cap\Phi\m(i_0\zbf)$ is bounded by the number $M$ of nodes of $\Tcal_\Acal$, as requested by Theorem~\ref{ref17}(1).

\begin{example}
Let\label{ref39} $\Acal$, $I_0$, $I_1$, $o,p,q,r,s,t$ be as in the second example of~\S\ref{ref13}. Figure~\ref{fig6} shows the transducer $\Tcal_\Acal$, omitting the labeling of the intermediate nodes.
\begin{figure}[ht]
\begin{tikzpicture}[scale=1.4]
\node (0) at (0,3)  []  {$0$};
\node (1)  at (4,3) []  {$1$};
\node (0p) at (0,1)  []  {$0'$};
\node (1p)  at (4,1) []  {$1'$};
\node (a)  at (1,4) []  {$\circ$};
\node (b)  at (3,4) []  {$\circ$};
\node (c)  at (4,2) []  {$\circ$};
\node (d)  at (5,2) []  {$\circ$};
\node (e)  at (1,0) []  {$\circ$};
\node (f)  at (3,0) []  {$\circ$};

\path
(0) edge[Nedge,out=150,in=210,loop] node[left] {$n\vert o$}
(0) edge[Nedge,out=80,in=190] node[left] {$n$} (a)
(a) edge[Nedge,out=260,in=10] node[right] {$l\vert q$} (0)
(a) edge[Nedge] node[above] {$n$} (b)
(b) edge[Nedge] node[right] {$l\vert r$} (1)
(1) edge[Nedge,out=-30,in=30,loop] node[right] {$l\vert p$}
(1) edge[Nedge] node[below,pos=.65] {$n\vert s$} (0p)
(1p) edge[Nedge] node[above,pos=.65] {$l\vert s$} (0)
(1) edge[Nedge] node[right] {$l$} (d)
(d) edge[Nedge] node[right] {$n\vert t$} (1p)
(1p) edge[Nedge,out=30,in=-30,loop] node[right] {$n\vert p$}
(1p) edge[Nedge] node[left] {$n$} (c)
(c) edge[Nedge] node[left] {$l\vert t$} (1)
(0p) edge[Nedge,out=210,in=150,loop] node[left] {$l\vert o$}
(0p) edge[Nedge,out=280,in=170] node[left] {$l$} (e)
(e) edge[Nedge,out=110,in=-10] node[right] {$n\vert q$} (0p)
(e) edge[Nedge] node[below] {$l$} (f)
(f) edge[Nedge] node[right] {$n\vert r$} (1p)
;
\end{tikzpicture}
\caption{The transducer $\Tcal_\Acal$, for the abstract cf $\Acal$  in \S\ref{ref13}}
\label{fig6}
\end{figure}
The point $(-\tau+8)/11\sim0.58017\ldots$ is in $I_1$ and equals $I_1(-\tau+3)$. Let $\zbf=nl(ln)^\omega$; then $\pi^*(\zbf)=-\tau+3$, since $(NL)\m(-\tau+3)=\tau-1$, which is fixed by $LN=\bbmatrix{1}{1}{1}{2}$. Feeding $\Tcal_\Acal$ with~$\zbf$ from node $1$, it outputs the two infinite sequences $soq^\omega$ and $srsq^\omega$, which are indeed the two symbolic orbits of $(-\tau+8)/11$ under the map of Figure~\ref{fig3} left.
\end{example}

We now prove Theorem~\ref{ref17}(2). 
Let $\abf,\bbf\in E_{i_0}^\omega$, with
$\abf=(a_0\ldots a_{p-1})^\omega$ purely periodic, be such that $\Phi(\abf)=\Phi(\bbf)$.
We will prove $\abf=\bbf$ by adapting the argument given in~\cite[Lemma~1.4]{devolder_et_al94} for the case of ordinary binary codes.

It is enough to show that $a_0=b_0$; indeed, if so, then $\Phi\bigl((a_1\ldots a_{p-1}a_0)^\omega\bigr)=\Phi(b_1b_2b_3\ldots)$ 
(because both correspond to $B_{a_0}\m(x_0)\in I_{i_1}$),
and we are through by induction. By possibly replacing $a_0\ldots a_{p-1}$ with its square, we may assume that $a_0\ldots a_{p-1}=i_0w i_0$ in $\widetilde\Sigma$, with $w\in\set{l,n}^{<\omega}$; thus $\Phi(\abf)=\Phi(\bbf)=
i_0w^\omega$. Since the number of nodes and that of prefixes of $w$ are both finite, there must exist a proper prefix $u$ of $w$, possibly empty, a node $j$, and indices $k>h\ge0$, $r\ge0$, $s\ge1$, such that
\begin{align*}
b_0\ldots b_{h-1}&=i_0 w^r uj,\\
b_0\ldots b_{k-1}&=i_0 w^{r+s} uj.
\end{align*}
But then
\[
b_0\ldots b_{k-1}=i_0w^si_0i_0w^ruj=(a_0\ldots a_{p-1})^sb_0\ldots b_{h-1}
\]
in $\widetilde\Sigma$; since $\Acal$ is a code, $b_0=a_0$ as desired.

\section{The Galois theorem}\label{ref25}

We now state our full version of Theorem~\ref{ref3}; the proof is crucially based on Theorem~\ref{ref17}.

\begin{theorem}
Let\label{ref19} $\set{I_i}$ be a realization of the abstract continued fraction $\Acal$, and let $\set{H_i}$, $\set{K_i}$, $\Fcal$, $F$ be as in Definition~\ref{ref16}.
Given $\omega_0\in(\bigcup_iH_i)\setminus\Qbb$, the following statements are equivalent.
\begin{itemize}
\item[(i)] At least one of the $F$-orbits of $\omega_0$ is purely periodic.
\item[(ii)] $\omega_0$ has precisely one $F$-orbit, and that orbit is purely periodic.
\item[(iii)] $\omega_0$ is a quadratic irrational and for some $i_0$ we have $\omega_0\in H_{i_0}$ and $\omega_0'\in K_{i_0}$.
\end{itemize}
If this happens, then the symbolic periods of $(i_0,\omega_0)$ under $\Fcal$ and of $(i_0,\omega_0')$ under $\du{\Fcal}$ are unique and the reverse of each other. If the realization is geometric then $\omega_0'$ has a unique purely periodic $\du{F}$-orbit, of the same period $p$ as the $F$-orbit of $\omega$, and the identity $(\du{F})^t(\omega'_0)=\bigl(F^{-t}(\omega_0)\bigr)'$ holds for every $t\pmod{p}$.
\end{theorem}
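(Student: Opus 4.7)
The strategy is the cycle (ii)$\Rightarrow$(i)$\Rightarrow$(iii)$\Rightarrow$(ii), followed by the supplementary duality statements. The implication (ii)$\Rightarrow$(i) is trivial. For (i)$\Rightarrow$(ii), a purely periodic $F$-orbit of $\omega_0$ gives a purely periodic element of $\pi\m(\omega_0)$; by Theorem~\ref{ref17}(2) this is the unique fibre element, so the $\Fcal$-orbit and hence the $F$-orbit are unique.

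For (i)$\Rightarrow$(iii), suppose the $F$-orbit of $\omega_0$ has symbolic period $a_0\ldots a_{p-1}$ with $a_t=i_t\sigma_t i_{t+1}$ and $i_p=i_0$. Then $\omega_0$ is a fixed point of $g:=B_{a_0\ldots a_{p-1}}\m=D_{a_{p-1}\ldots a_0}\in\PSL_2^\pm\Zbb$; irrationality forces $g$ to be hyperbolic, so $\omega_0$ is quadratic irrational with second fixed point $\omega_0'$. Reading $g$ as $D_{a_{p-1}\ldots a_0}$ exhibits it as a composition of strict contractions between the complementary intervals $I_iS$, with $\omega_0'$ the attracting fixed point; applying Lemma~\ref{ref27} to the purely periodic dual path $(a_{p-1}\ldots a_0)^\omega$ in $\Gcal_{\du\Acal}$ identifies $\omega_0'$ as a point of $K_{i_0}$.

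For (iii)$\Rightarrow$(i), fix any $\abf=a_0a_1\ldots\in\pi\m(\omega_0)$, set $\omega_t:=D_{a_{t-1}\ldots a_0}(\omega_0)\in H_{i_t}$, and note that since $D_{a_{t-1}\ldots a_0}$ is $\Zbb$-rational, Galois conjugation commutes with it, whence $\omega_t'=D_{a_{t-1}\ldots a_0}(\omega_0')$, and the dual IFS structure places $\omega_t'\in K_{i_t}$. The central step is to prove that $\{\omega_t\}$ is finite: writing $\omega_t$ as a root of a primitive integer quadratic $A_tX^2+B_tX+C_t$ with invariant discriminant $\Delta=A_t^2(\omega_t-\omega_t')^2$, the compact containments $\omega_t\in\bigcup_iH_i$ and $\omega_t'\in\bigcup_iK_i$ control $|B_t/A_t|$ and $|C_t/A_t|$, so finiteness reduces to a uniform positive lower bound on $|\omega_t-\omega_t'|$. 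The sets $H_{i_t}$ and $K_{i_t}$ meet at most at the rational endpoints of $I_{i_t}$, so the irrational $\omega_t,\omega_t'$ are separated pointwise; upgrading to a uniform bound is the main obstacle, which I expect to handle by a combinatorial \emph{cusp-avoidance} argument — a subsequence $|\omega_{t_k}-\omega_{t_k}'|\to 0$ would force both sequences to accumulate at a common rational endpoint, producing arbitrarily long runs of a single parabolic letter in the tail of $\abf$ and contradicting the combinatorial structure of the code $\Acal$. Granted finiteness, eventual periodicity yields $\omega_{t_0}=\omega_{t_0+p}$; then $\omega_{t_0}$ is a fixed point of $B_{a_{t_0}\ldots a_{t_0+p-1}}\m$, so $(a_{t_0}\ldots a_{t_0+p-1})^\omega\in\pi\m(\omega_{t_0})$ and, by Theorem~\ref{ref17}(2), is the unique fibre element, showing that the tail of $\abf$ is purely periodic. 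To force $t_0=0$, apply the same analysis to $\omega_0'\in K_{i_0}$ under the dual multivalued map $\du\Fcal$: its dual symbolic sequence is also eventually purely periodic and, via the labeling convention of Remark~\ref{ref30}, is the reversal of $\abf$, so the preperiodic tails must match and hence both vanish.

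The supplementary statements follow by applying the proved equivalence to $\omega_0'$ in the dual setting plus a short calculation. Uniqueness and pure periodicity of the $\du\Fcal$-orbit, with period $p$ and symbolic period the reverse of $a_0\ldots a_{p-1}$, come from the equivalence and the identity $D_{a_{p-1}\ldots a_0}(\omega_0')=\omega_0'$. Under geometric realization $\du F$ is single-valued, and the identity $(\du F)^t(\omega_0')=(F^{-t}(\omega_0))'$ for $t\pmod p$ follows by induction from the base step $\du F(\omega_0')=D_{a_{p-1}}\m(\omega_0')=\omega_{p-1}'=(F^{-1}(\omega_0))'$, where the branch of $\du F$ active at $\omega_0'$ is identified via Remark~\ref{ref20}(7) with the edge $a_{p-1}$ of the periodic orbit.
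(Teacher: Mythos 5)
Your argument reaches the same targets as the paper but leaves two or three genuine gaps, all in places where the paper has slicker machinery.

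First, in (i)$\Rightarrow$(ii) you argue: a purely periodic $F$-orbit lifts to a purely periodic element of $\pi\m(\omega_0)$, which by Theorem~\ref{ref17}(2) is the unique fibre element, hence the $\Fcal$-orbit and the $F$-orbit are unique. But Theorem~\ref{ref17}(2) gives uniqueness of the fibre \emph{inside a fixed} $E_{i_0}^\omega$. When $\omega_0$ lies in $H_{i_0}\cap H_j$ for some $j\ne i_0$, another $F$-orbit could arise from a symbolic sequence that starts at node $j$, and (2) says nothing about that. The paper's proof closes this by its explicit Claim: using the realization condition (Definition~\ref{ref16}(i)) one constructs, inductively, a single sequence $\cbf\in E_{i_0}^\omega$ whose shift orbit projects to the rival $F$-orbit; only then does Theorem~\ref{ref17}(2) finish. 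You need some version of this lifting argument.

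Second, your (iii)$\Rightarrow$(i) reduces finiteness of $\{(\omega_t,\omega_t')\}$ to a uniform lower bound on $\abs{\omega_t-\omega_t'}$, and you flag the ``cusp-avoidance'' step as the main obstacle. That obstacle does not arise in the paper, because it exploits the sign separation directly: since $H_i\subseteq I_i[0,\infty]$ and $K_i\subseteq I_i[\infty,0]$, after the unimodular change of variables $I_i\m$ the conjugates sit on opposite sides of $0$, forcing $\omega\omega'<0$ and hence $\abs{f_2}<\sqrt{D}$ for the primitive polynomial $f_1x^2+f_2x+f_3$. Together with $f_1f_3=(f_2^2-D)/4$ this gives finitely many $(f_1,f_2,f_3)$ with no lower-bound estimate needed. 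Your proposed alternative is not only harder; it is not clear it can be made to work, since $\omega_t$ and $\omega_t'$ may indeed both drift toward a common rational cusp without producing unbounded runs of a single letter in $\abf$ (the parabolic letter governing the cusp of $H_{i_t}$ need not be the one governing the cusp of $K_{i_t}$).

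Third, even granting finiteness, you obtain only \emph{eventual} periodicity and then wave toward time-reversal to ``force $t_0=0$.'' But the reversal relation between $\abf$ and the dual symbolic sequence is exactly what is established only once pure periodicity is known; invoking it to prove pure periodicity is circular as stated. The paper avoids this by introducing the bijection $\Scal(\abf,\bbf)=(S\abf,a_0\bbf)$ on the finite set $\bigcup_i\Qcal_i(D)$ (finite by the previous finiteness and Theorem~\ref{ref17}(1)); a bijection of a finite set is a permutation, so some power of $\Scal$ fixes $(\abf,\bbf)$, which directly gives $S^p\abf=\abf$ and $S^p\bbf=\bbf$ simultaneously --- no preperiod can survive. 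Adopting the sign-separation argument and the $\Scal$-bijection would close both gaps cleanly.
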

\begin{proof}
Assume (i) and let $\omega_0,\omega_1,\ldots$ be the given purely periodic $F$-orbit of $\omega_0$. Then there exists $i_0$ and a purely periodic $\abf\in E_{i_0}^\omega$ such that $\omega_t=\pi(S^t\abf)$ for every~$t$. By definition, any other $F$-orbit $x_0,x_1,\ldots$ 
of $\omega_0=x_0$ is of the form $x_t=\pi(S^t\bbf)$ for some $j_0$ and $\bbf\in E_{j_0}^\omega$.

\paragraph{\emph{Claim}} Given such an $F$-orbit, there exists $\cbf\in E_{i_0}^\omega$ such that $\pi(S^t\cbf)=\pi(S^t\bbf)$ for every $t$.
\paragraph{\emph{Proof of Claim}} 
Since $\set{I_i}$ is a realization of $\Acal$, by definition there exists $\cbf^0\in E_{i_0}^\omega$ with $\pi(\cbf^0)=x_0$ and $\pi(S\cbf^0)=x_1$. Analogously, letting $i_1$ be the starting node of $S\cbf^0$, there exists $\cbf^1\in E_{i_1}^\omega$ with $\pi(\cbf_1)=x_1$ and $\pi(S\cbf^1)=x_2$, and so on by induction. By construction, the sequence $\cbf=c^0_0c^1_0c^2_0\ldots$ of first elements of the successive $\cbf^t$ belongs to $E_{i_0}^\omega$. Moreover, for each $t\ge0$, we have
\[
x_t=\pi(\cbf^t)=B_{c^t_0}\bigl(\pi(S\cbf^t)\bigr)=
B_{c^t_0}(x_{t+1})=B_{c^t_0}B_{c^{t+1}_0}(x_{t+2})=\cdots.
\]
This shows that $x_t=\pi(S^t\cbf)$, as requested.

Having proved our claim, we conclude that 
$\pi(\abf)=\pi(\cbf)$, with $\abf$ purely periodic.
By Theorem~\ref{ref17}(2) we have $\abf=\cbf$, and thus $\omega_t=x_t$ for every~$t$; this establishes~(ii).

Assume (ii), and let $\abf=(a_0\ldots a_{p-1})^\omega \in E_{i_0}^\omega$ be such that the given orbit is
the $\pi$-image of the $S$-orbit of $\abf$.
Writing $a_t=i_t\sigma_t i_{t+1}$, we must then have $i_p=i_0$. Since $\omega_0$ is fixed by $B_{a_0\ldots a_{p-1}}$, 
which is not the identity matrix by the observation after Lemma~\ref{ref14}, $\omega_0$
is a quadratic irrational. Applying the Galois conjugation and Remark~\ref{ref30} we obtain $D_{a_{p-1}\ldots a_0}(\omega_0')=\omega_0'$, which is in $K_{i_p}=K_{i_0}$; thus (iii) holds.

Let a discriminant $D\in\Zbb\pp$ be given; then the set of all pairs $(\omega,\omega')$ of Galois conjugates of quadratic irrationals that have discriminant $D$ and are such that $\omega'<0<\omega$, is finite. Indeed, letting $f_1x^2+f_2x+f_3\in\Zbb[x]$ be the primitive polynomial of the pair, we must have $0>\omega\omega'=(f_2^2-D)/(4f_1^2)$. Therefore we have the bound $\abs{f_2}<\sqrt{D}$ and, since $f_1f_3=(f_2^2-D)/4$, there are finitely many possibilities for $(f_1,f_2,f_3)$, and thus for $(\omega,\omega')$. Remembering that $H_i\subseteq I_i[0,\infty]$ and $K_i\subseteq I_i [\infty,0]$, a fortiori the set $Q_i(D)$ of all pairs $(\omega,\omega')\in H_i\times K_i$ such that $\omega$ and $\omega'$ are conjugate of discriminant $D$ is finite for every~$i$.

Assume now (iii), let $D$ be the discriminant of $\omega_0$, and let
\[
\Qcal_i(D)=\bigl\{(\abf,\bbf):\abf\in E_i^\omega,\bbf\in(\du{E})_i^\omega,\bigl(\pi(\abf),\pi(\bbf)\bigr)\in Q_i(D)\bigr\}.
\]
The map $\Scal:\bigcup_i\Qcal_i(D)\to\bigcup_i\Qcal_i(D)$ given by $\Scal(\abf,\bbf)=(S\abf,a_0\bbf)$ is well defined and is a bijection. Indeed, say $(\abf,\bbf)\in\Qcal_i(D)$;
according to our conventions in Remark~\ref{ref30},
we have $a_0=i\sigma j$ and
$b_0=h\tau i$ for some $\sigma,\tau\in\Sigma$ and nodes $j,h$. Thus $S\abf\in E_j^\omega$ and 
\[
a_0\bbf=
(i\sigma j)(h\tau i)b_1\ldots\in(\du{E})_j^\omega.
\]
Since $\pi(S\abf)=B_{a_0}\m\bigl(\pi(\abf)\bigr)$ and $(a_0\bbf)=D_{a_0}\bigl(\pi(\bbf)\bigr)=B_{a_0}\m\bigl(\pi(\bbf)\bigr)$, the relation of being Galois conjugate and the discriminant are preserved; therefore $\Scal(\abf,\bbf)\in\Qcal_j(D)$. Clearly $\Scal$ is invertible, with $\Scal\m(\abf,\bbf)=(b_0\abf,S\bbf)$.

Lift the pair $(\omega_0,\omega_0')\in H_{i_0}\times K_{i_0}$ in (iii) to some $(\abf,\bbf)\in\Qcal_{i_0}(D)$. We proved above that $\bigcup_i Q_i(D)$ is finite, and Theorem~\ref{ref17}(1) yields that $\bigcup_i \Qcal_i(D)$ is finite, too. As $\Scal$ is a bijection, there exists $p\ge1$ such that $\Scal^p(\abf,\bbf)=(\abf,\bbf)$; therefore $\abf=(a_0\ldots a_{p-1})^\omega$ and $\bbf=(a_{p-1}\ldots a_0)^\omega$. This shows (i) and, by Theorem~\ref{ref17}(2), proves that $(\abf,\bbf)$ is the unique lift of $(\omega,\omega')$; thus, \emph{a posteriori}, $Q_i(D)$ and $\Qcal_i(D)$ are in bijection for every~$i$.

We have thus proved the equivalence of (i), (ii), (iii). The two final sentences in our statement are now automatic from the construction.
\end{proof}

\section{Jump transformations and purely periodic points}\label{ref26}

In this last section we discuss how passing from a Gauss-type map $F$ to a jump acceleration $F_{\jump}$ affects the set of purely periodic points. Let $\set{I_i}$ be a realization of the abstract continued fraction $\Acal$. By Theorem~\ref{ref28},
the attractor $\bigcup_iH_i$ contains intervals, and hence a rational point which, possibly after a global conjugation, we assume to be $0$.
For each node $i$, let
\[
\Pcal_i=\set{a\in E_{ii}:\text{$a$ is parabolic and $B_a(0)=0$}}.
\]
Every $\Pcal_i$ is either empty or a singleton (if $a,b\in\Pcal_i$ then the products $ab$ and $ba$ are both defined in $\widetilde\Sigma$ and equal; since $\Acal$ is a code, we must have $a=b$). We assume that $\Pcal=\bigcup_i\Pcal_i$ is not empty, and set $\Jcal=\Acal\setminus\Pcal\not=\emptyset$.

\begin{lemma}
Let\label{ref22} $\abf\in E^\omega$; then no distinct letters $a,b$, both of them in $\Pcal$, may appear consecutively in~$\abf$. Moreover, either $\abf$ contains infinitely many letters in $\Jcal$, or it ends with $a^\omega$, for some $a\in\Pcal$. In this second case, $\pi(\abf)\in\Qbb$.
\end{lemma}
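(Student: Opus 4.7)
The plan is to prove the three assertions of the lemma in sequence, each one being a short consequence of the setup rather than requiring new machinery.

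First I would establish the non-consecutivity claim directly from the definition of $\Pcal_i$. Any $a \in \Pcal$ is, by construction, a loop $a = i\sigma i$ at some node~$i$. If $a, b \in \Pcal$ occurred consecutively in $\abf$, the composability condition in $\widetilde\Sigma$ would force both loops to be based at the same node, so both would lie in a single $\Pcal_i$. But the paragraph preceding the lemma observes that $\Pcal_i$ is either empty or a singleton (using that $\Acal$ is a code together with the fact that any two elements of $\Pcal_i$ commute to the same element of $\widetilde\Sigma$), so $a = b$.

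The second assertion then follows from the first by a trivial combinatorial argument: if only finitely many letters of $\abf$ belong to $\Jcal$, let $t_0$ be past the last such occurrence. From $t_0$ onward every letter lies in $\Pcal$, and by the first part no two consecutive such letters can be distinct, so $a_{t_0} = a_{t_0+1} = \cdots$, whence $\abf$ terminates with $a^\omega$ for $a = a_{t_0}$.

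For the third assertion, I would exploit the parabolic character of the matrix $B_a$. Since $a \in \Pcal_i$ is parabolic in the sense of Remark~\ref{ref30}, $\Proj(\sigma_a)$ is $L^k$ or $N^k$, a parabolic element of $\PSL_2\Zbb$ with a unique (rational) fixed point; conjugation by $I_i$ together with the normalization $B_a(0) = 0$ identifies that fixed point as~$0$. Hence the nested chain $I_i \supseteq B_a I_i \supseteq B_a^2 I_i \supseteq \cdots$ shrinks to $\{0\}$, giving $\pi(a^\omega) = 0$. Writing $t_0$ for the start of the $a^\omega$ tail, $\pi(\abf) = B_{a_0 \cdots a_{t_0-1}}(0)$, which lies in $\Qbb$ as the image of a rational point under a matrix with integer entries. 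I do not anticipate any genuine obstacle; the only step deserving care is verifying that the $B_a$-iterates contract $I_i$ to $\{0\}$ rather than to a larger interval, but this is guaranteed by applying Lemma~\ref{ref27} to the tail $a^\omega$, which already ensures the intersection is a singleton.
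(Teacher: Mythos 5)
Your proof is correct and follows essentially the same route as the paper's: consecutivity in the path forces the two parabolic loops to be based at the same node, whence equality by the singleton property of $\Pcal_i$ established just before the lemma; the second assertion is the immediate combinatorial consequence; and the third assertion reduces to $\pi(a^\omega)=0$ via Lemma~\ref{ref27} and the fact that $B_a$ fixes $0$, so that $\pi(\abf)=B_{a_0\cdots a_{r-1}}(0)\in\Qbb$. The paper states the first two points as ``clear'' in one line and writes the final equation without comment; you have merely made explicit the same reasoning, in particular the step that the nested chain $I_i\supseteq B_aI_i\supseteq\cdots$ contracts to the fixed point~$\{0\}$, which is the only point deserving justification and which you correctly attribute to Lemma~\ref{ref27}.
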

\begin{proof}
The first two statements are clear, since if $i\not=j$ then no element of $E_{jj}$ can follow an element of $E_{ii}$.
For the last statement, assume $\abf=a_0\ldots a_{r-1}a^\omega$, for some $a\in\Pcal$. Then
\[
\pi(\abf)=B_{a_0\ldots a_{r-1}}
\bigl(\pi(a^\omega)\bigr)=B_{a_0\ldots a_{r-1}}(0)\in\Qbb.
\]
\end{proof}

The multivalued map $F_{\jump}$ from $(\bigcup_iH_i)\setminus\Qbb$ to itself is then defined by setting $F_{\jump}(x)=\pi(S^{e(\abf)+1}\abf)$, where $\pi(\abf)=x$ and $e(\abf)=\min\set{t\ge0:a_t\in\Jcal}$.
Thus $F_{\jump}$ is an acceleration of $F$, and
Theorem~\ref{ref17} implies that the number of $F_{\jump}$-orbits of $x$ is finite and uniformly bounded, and that $F_{\jump}$ is single-valued along purely periodic orbits.

For every $i$, let
\begin{equation}
\begin{split}\label{eq6}
R_i&=\pi\bigl[\set{\abf\in E_i^\omega:a_0\in\Jcal}\bigr]\\
&=\bigcup\bigl\{D_b[K_j]:\text{($j\not=i$ and $b\in E_{ji}$) or ($j=i$ and $b\in E_{ii}\setminus\Pcal_i$)}\bigr\}.
\end{split}
\end{equation}
If $\Pcal_i$ is empty then $K_i=R_i$.
Otherwise, if $\Pcal_i=\set{a}$, then
\begin{equation}\label{eq12}
K_i=\bigcup_{t\ge0}D_a^t[R_i]\cup\set{0};
\end{equation}
this follows by recursively nesting the identity
$K_i=R_i\cup D_a[K_i]$
and observing that $\bigcap_{t\ge0}D_a^t[K_i]=\set{0}$.

\begin{remark}
The\label{ref33} above construction clarifies the structure of the attractor in Lemma~\ref{ref35} and in all our subsequent examples. The key difficulty in determining $\set{K_i}$ is guessing the ``basic blocks'' $\set{R_i}$; once this is done, then each $K_i$ is the closure (which amounts to the final adding of $0$) of the $D_a$-orbit of $R_i$, where $a$ is the parabolic letter in $\Pcal_i$. Lemma~\ref{ref35} is nothing else than the proof that the choice $R_0=R_1=[\infty,-2]$ is the correct one for the $(\tau-1)$-cf of~\S\ref{ref13}.
\end{remark}

Let $\du{F}_{R}:R\to R$ be the first-return map induced by $\du{F}$ on $R=(\bigcup_iR_i)\setminus\Qbb$, namely $\du{F}_R(x)=(\du{F})^{q(x)}(x)$, where $q(x)=\min\set{t\ge1:(\du{F})^t(x)\in R}$.

\begin{theorem}
Assume\label{ref31} the hypotheses of Theorem~\ref{ref19}, and 
further assume that $H_i\cap H_j\subseteq\Qbb$ for $i\not=j$.
Let $F_{\jump}$ be defined as above. 
Given $\omega_0\in H_{i_0}\setminus\Qbb$ the following statements are equivalent.
\begin{itemize}
\item[(i)] At least one of the $F_{\jump}$-orbits of $\omega_0$ is purely periodic.
\item[(ii)] $\omega_0$ has precisely one $F_{\jump}$-orbit, and that orbit is purely periodic.
\item[(iii)] $\omega_0$ is a quadratic irrational and $\omega_0'\in R_{i_0}$.
\end{itemize}
If this happens and the realization is geometric, then $\omega_0'$ has a unique $\du{F}_R$-orbit, which is purely periodic of the same period $p$ as the $F_{\jump}$-orbit of $\omega_0$, and the identity $(\du{F}_R)^t(\omega'_0)=\bigl(F_{\jump}^{-t}(\omega_0)\bigr)'$ holds for every $t\pmod{p}$.
\end{theorem}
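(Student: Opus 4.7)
The plan is to derive Theorem~\ref{ref31} from Theorem~\ref{ref19} by tracking how the symbolic dynamics of $F_{\jump}$ sits inside those of $F$. Since $\omega_0\notin\Qbb$ and $H_i\cap H_j\subseteq\Qbb$ for $i\neq j$, the node $i_0$ with $\omega_0\in H_{i_0}$ is unique, so every $F$-symbolic sequence of $\omega_0$ lies in $E_{i_0}^\omega$. By Lemma~\ref{ref22}, any such sequence $\abf$ contains infinitely many $\Jcal$-letters, so $e(\abf)<\infty$ and the $F_{\jump}$-orbit of $\omega_0$ is well-defined; moreover $\abf$ decomposes canonically as a sequence of ``blocks'', each ending with a single $\Jcal$-letter, and the $F_{\jump}$-orbit of $\omega_0$ is purely periodic iff this block sequence is, which in turn is equivalent to $\abf=(a_0\ldots a_{p'-1})^\omega$ with $a_{p'-1}\in\Jcal$.

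Using this reformulation, (i)$\Rightarrow$(ii) and (i)$\Rightarrow$(iii) follow at once from Theorem~\ref{ref19}: a purely periodic $F_{\jump}$-orbit of $\omega_0$ lifts to a purely periodic $F$-orbit, so the $F$-orbit (and hence the $F_{\jump}$-orbit) is unique, $\omega_0$ is a quadratic irrational, and $\omega_0'\in K_{i_0}$. The corresponding dual symbolic sequence is $\bbf=(a_{p'-1}\ldots a_0)^\omega$ and begins with the $\Jcal$-letter $a_{p'-1}$, so by \eqref{eq6} we have $\omega_0'=\pi(\bbf)\in R_{i_0}$. Conversely, for (iii)$\Rightarrow$(i), $\omega_0'\in R_{i_0}\subseteq K_{i_0}$ yields by Theorem~\ref{ref19} a unique purely periodic $F$-orbit of $\omega_0$ with some symbolic period $a_0\ldots a_{p'-1}$; Theorem~\ref{ref17}(2) applied on the dual side forces the unique purely periodic dual lift to be $(a_{p'-1}\ldots a_0)^\omega$, and $\omega_0'\in R_{i_0}$ then forces $a_{p'-1}\in\Jcal$, giving a purely periodic $F_{\jump}$-orbit of $\omega_0$.

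For the dual claim under a geometric realization, Theorem~\ref{ref19} supplies the identity $(\du{F})^t(\omega_0')=\bigl(F^{-t}(\omega_0)\bigr)'$ for every $t\pmod{p'}$. The first-return times of $\du{F}$ to $R$ starting at $\omega_0'$ are precisely the indices immediately after each $\Jcal$-letter in the dual period $a_{p'-1}\ldots a_0$; under the time-reversal $t\mapsto p'-t$ these match the $F_{\jump}$-jump times of $\omega_0$ in the period $a_0\ldots a_{p'-1}$. Consequently the $\du{F}_R$-orbit of $\omega_0'$ is single-valued and purely periodic with the same minimal period $p$ as the $F_{\jump}$-orbit of $\omega_0$, and the desired identity $(\du{F}_R)^t(\omega_0')=\bigl(F_{\jump}^{-t}(\omega_0)\bigr)'$ is obtained by restricting the Theorem~\ref{ref19} identity to these aligned subsequences of indices.

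The main technical obstacle will be the bookkeeping in the last paragraph: one must set up precisely the bijection between the $F_{\jump}$-jump positions in $a_0\ldots a_{p'-1}$ and the $\du{F}_R$-return positions in $a_{p'-1}\ldots a_0$, and then verify that the two ``skipping'' mechanisms agree---iterating past consecutive $\Pcal$-letters for $F_{\jump}$, versus first-returning out of the $\Pcal$-orbit of $R$ for $\du{F}_R$. A secondary but important check is that the right-hand side of \eqref{eq6} really equals $\pi\bigl[\set{\bbf\in(\du{E})_{i_0}^\omega:b_0\in\Jcal}\bigr]$; this is essentially a rewording of the definition, but it is what permits the transfer between the geometric condition $\omega_0'\in R_{i_0}$ and the combinatorial condition $a_{p'-1}\in\Jcal$ used throughout.
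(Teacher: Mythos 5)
Your proposal is correct and takes essentially the same route as the paper: you decompose the $F$-symbolic sequence into blocks of the form (parabolic prefix)$\cdot$($\Jcal$-letter), translate pure periodicity of the $F_{\jump}$-orbit to pure periodicity of the block sequence, and then derive everything from Theorem~\ref{ref19} together with Theorem~\ref{ref17}(2). The ``technical obstacle'' you flag at the end is exactly where the paper's proof spends its care, and the mechanism is one more application of Theorem~\ref{ref17}(2) rather than any combinatorial bookkeeping: to show that no intermediate iterate $(\du{F})^q(\omega_0')$, $1\le q\le e_{p-1}$, lies in $R$, one observes that such an iterate is the $\pi$-image of a purely periodic sequence whose first letter is the parabolic $a_{p-1}$, so by Theorem~\ref{ref17}(2) that is its \emph{only} symbolic realization; since membership in $R$ means having \emph{some} realization beginning with a $\Jcal$-letter, the intermediate iterate cannot lie in $R$. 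Once that is in place, the $\du{F}_R$-return times align exactly with the block boundaries and the conjugation identity follows from Theorem~\ref{ref19} as you describe.
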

\begin{proof}
This is best seen in terms of symbolic dynamics. Consider the infinite alphabet $\Wcal$ whose elements are all words $w=a^eb$, with $a\in\Pcal_i$, $b\in E_{ij}\cap\Jcal$, and $e\ge0$; words in $\Wcal$ can be concatenated if and only if the corresponding paths in $\Gcal_\Acal$ can be concatenated. The key observation here is that infinite sequences $\wbf$ over $\Wcal$ are in $1$--$1$ correspondence with infinite paths in $\Gcal_\Acal$ containing infinitely many letters in $\Jcal$.

By definition, every $F_{\jump}$-orbit of $\omega_0$ is of the form $\pi(S^t\wbf)$, with $S$ denoting word shift. Assume we have two such orbits and that the first one is purely periodic. Then these are induced as above by two sequences $\wbf$ and $\wbf'$, with $\wbf$ purely periodic. By Theorem~\ref{ref17}(2) $\wbf$ and $\wbf'$ are equal as elements of $E_{i_0}^\omega$ and hence, by the observation above, are equal as sequences over $\Wcal$; this shows that (i) and (ii) are equivalent.

Assume (ii) and let $\wbf=(w_0\ldots w_{p-1})^\omega$ be such that $F_{\jump}^t(\omega_0)=\pi(S^t\wbf)$ for every $t$. Then clearly $\omega_0$ is a quadratic irrational and $w_0\ldots w_{p-1}$, thought as a word in $\Acal$, is the symbolic period of $(i_0,\omega_0)$ under $\Fcal$. By Theorem~\ref{ref19} this symbolic period is unique and is the reverse of the symbolic period of $(i_0,\omega_0')$ under $\du{\Fcal}$. Let $b\in\Acal$ be the last letter in $w_{p-1}$. Then $b\in E_{ji_0}\cap\Jcal$ for some $j$, and therefore $\omega_0'\in D_b[K_j]\subseteq R_{i_0}$ by~\eqref{eq6}; thus (iii) holds.

The argument can be reversed: assuming (iii), the unique symbolic period of $(i_0,\omega_0')$ under $\du{\Fcal}$ must begin with a letter in $\Jcal$. Thus, the reverse of this symbolic period is of the form $w_0\ldots w_{p-1}$ for certain words $w_t\in\Wcal$, and (i) holds.

We now prove our last statement. Let $w_0\ldots w_{p-1}$ be the word over $\Wcal$ that, as a word over $\Acal$, is the symbolic period of $(i_0,\omega_0)$ under $\Fcal$; say that $w_t=a_t^{e_t}b_t$. By Theorem~\ref{ref19}, the symbolic period of $(i_0,\omega'_0)$ under $\du{\Fcal}$ is $b_{p-1}a_{p-1}^{e_{p-1}}\ldots b_0a_0^{e_0}$. Working by induction on $p$, it suffices to show that $(\du{F})^{e_{p-1}+1}(\omega'_0)\in R$,
that $(\du{F})^q(\omega'_0)\notin R$ for every $1\le q\le e_{p-1}$, and that $(\du{F}_R)(\omega'_0)=\bigl(F_{\jump}\m(\omega_0)\bigr)'$.
The first statement is true because $(\du{F})^{e_{p-1}+1}(\omega'_0)$ is the $\pi$-image of an infinite path in $\Gcal_{\du{\Acal}}$ that begins with $b_{p-2}\in\Jcal$ (index modulo $p$). If the second statement were false for some $q$, we would have that $(\du{F})^q(\omega'_0)$ is, on the one hand, the $\pi$-image of
\[
\bigl(a_{p-1}^{e_{p-1}-q+1}b_{p-2}a_{p-2}^{e_{p-2}}\ldots
b_0a_0^{e_0}b_{p-1}a_{p-1}^{q-1}\bigr)^\omega,
\]
and on the other the $\pi$-image of an infinite path in $\Gcal_{\du{\Acal}}$ that begins with a letter in $\Jcal$;
since $e_{p-1}-q+1\ge1$ and $a_{p-1}\notin\Jcal$,
this contradicts Theorem~\ref{ref17}(2). Finally, the third one follows from the corresponding statement in Theorem~\ref{ref19}, as
\[
(\du{F}_R)(\omega'_0)=
(\du{F})^{e_{p-1}+1}(\omega'_0)=
\bigl(F^{-(e_{p-1}+1)}(\omega_0)\bigr)'=
\bigl(F_{\jump}\m(\omega_0)\bigr)'.
\]
\end{proof}

We conclude this paper by applying Theorems~\ref{ref19} and \ref{ref22} to the cases in Corollary~\ref{ref4}(2)--(5).

\begin{example}
The\label{ref38} Ceiling map is the jump transformation determined by taking $a=0l0$, $b=0n0$, $\Acal=\set{a,b}$, $I_0=[0,1]$, $\Pcal_0=\set{a}$. Since $\Acal=\du{\Acal}$, we plainly have $H_0=[0,1]$ and $K_0=[1,0]$. Thus $R=D_b[1,0]=[1,\infty]$ and we obtain Corollary~\ref{ref4}(2). Explicit computation shows that the dual $\du{F}_{R}$ of the Ceiling map is conjugate via $x\mapsto -x\m$ to the R\'enyi transformation~\cite{renyi57b} $x\mapsto -x\m-\ceil{-x\m}$ on $[-1,0]$.
\end{example}

\begin{example}
Treating\label{ref23} the Even map is similar, and actually simpler, to treating the $(\tau-1)$-cf of \S\ref{ref13}. The slow version is given by taking $n=2$, $\Acal=\set{0n^20,1l1,0l0,0nl1,1nf0,1lnf1}$, $I_0=H_0=[-1,0]$, $I_1=H_1=[0,1]$. We again denote the elements of $\Acal$ by $o,p,q,r,s,t$, in this order; similarly to the case of \S\ref{ref13}, they match in pair, with $B_s=B_t$ and $B_q=B_r$ as matrices. We have $\Pcal_0=\set{o}$, $\Pcal_1=\set{p}$, and the resulting $F_{\jump}$ is precisely the Even map of  Figure~\ref{fig9} left.
\begin{figure}[ht]
\includegraphics[width=5cm]{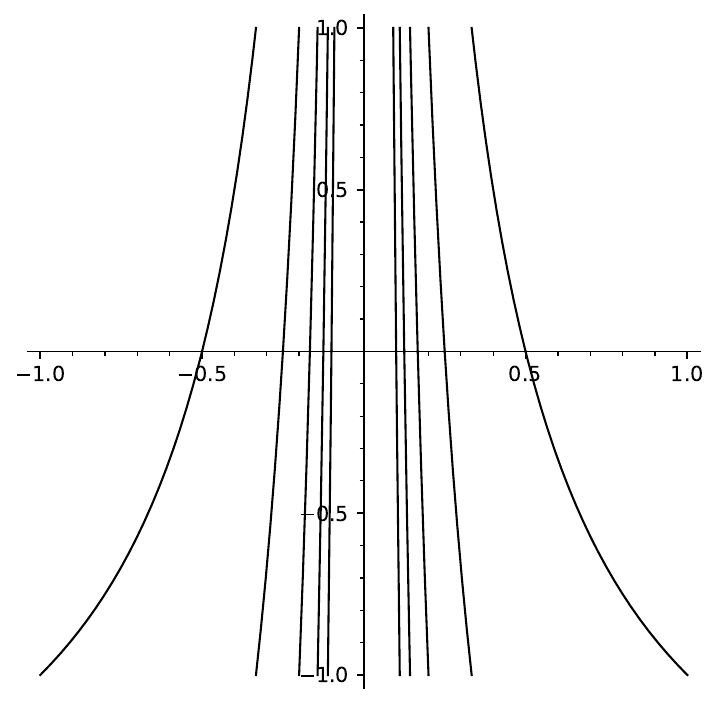}
\hspace{0.6cm}
\includegraphics[width=5cm]{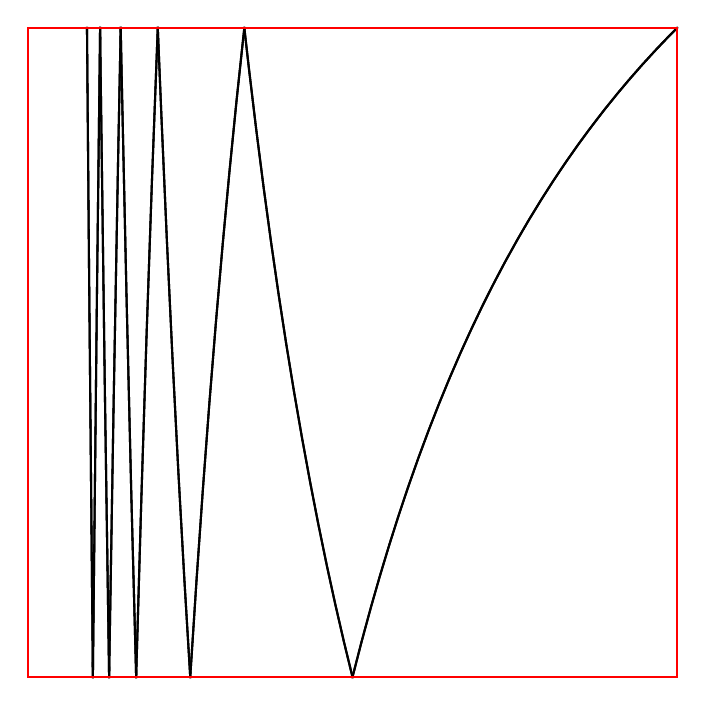}
\caption{The unfolded Even map and its dual, the folded one}
\label{fig9}
\end{figure}

The attractor $(K_0,K_1)$ arises as in Lemma~\ref{ref35} by taking $R_0=R_1=[\infty,-1]$. All the discussion in~\S\ref{ref13} for the $(\tau-1)$-cf carries through with the appropriate modifications. We plot in Figure~\ref{fig9} right the map $\du{F}_R$,
which is conjugate via $x\mapsto -x\m$ to the folded version on $[0,1]$ of Even,
\[
x\mapsto\bigl\vert x\m-\text{(the even integer nearest to $x\m$)}\bigr\vert.
\]
\end{example}

\begin{example}
The\label{ref32} Odd and Nearest Integer cases are similar, but more involved than the previous ones; we treat the Odd case in some detail and sketch the modifications needed for Nearest Integer. The graph $\Gcal_\Acal$ and the slow Odd map are shown in Figure~\ref{fig10}; the names of the edges mirror those in Figure~\ref{fig1} and we have $I_0=H_0=[-1,0]$, $I_1=H_1=[0,1]$.
Differently from the cases in~\S\ref{ref13} and in Example~\ref{ref23}, here $R_0\not=R_1$; indeed, setting $R_0=[\infty,-\tau-1]$, $R_1=[\infty,-\tau+1]$ the statement and the proof of Lemma~\ref{ref35} carry through. The first part of Theorem~\ref{ref31} now yields Corollary~\ref{ref4}(4).

A complication arises with the dual map; Figure~\ref{fig11} is the Odd analogue of Figure~\ref{fig4} for the $(\tau-1)$-cf. The branches $D_q\m$ and $D_r\m$ do not collapse anymore, and neither do $D_s\m$ and $D_t\m$. The immediate consequence is that the realization $(I_0,I_1)$ is not geometric.
Consider for example any point $\alpha$ in the domain $[-3,-\tau-1]$ of the second from right incomplete branch in Figure~\ref{fig11} right. As a point of $K_0$, only $D_q\m$ may act on $\alpha$, while as a point of $K_1$ only $D_r\m$ may act. The two images are different, so
$\du{\Fcal}$ does not descend to a Gauss-type map. 
However, due to the identities $D_t\m=D_p\m D_s\m$ and $D_q\m=D_o\m D_r\m$ (that can be checked directly, or more interestingly by realizing them as identities on the $B_a\m$-branches in Figure~\ref{fig10} right), the first-return map $\du{\Fcal}_{R_0\cupdot R_1}$ does descend to a map $\du{F}_R$ on $R=(R_0\cup R_1)\setminus\Qbb(\tau)$, which is shown in Figure~\ref{fig11} right.

We stress an issue related to our discussion in Remark~\ref{ref34}: $\du{F}_R$ is an excellent dual to Odd in the measure-theoretic sense. However, since the realization is not geometric, it is \emph{not} a dual in the sense of the second part of Theorem~\ref{ref31}. For example, the point $\omega=\tau-2$ is fixed under Odd, with symbolic period $q$. However, $\omega'=-\tau-1$ has two $\du{F}_R$-images, namely itself and $-\tau+1$, and uncountably many $\du{F}_R$-orbits.

All of the above holds for the Nearest Integer case with 
the appropriate modifications:
$\Acal$ equals now $\set{0n0, 1l1, 0ln0, 0l^21, 1nlf0, 1n^2f1}$, $I_0=H_0=[-1/2,0]$, and $I_1=H_1=[0,1/2]$. Taking $R_0=[\infty,-\tau-1]$ and $R_1=[\infty,-\tau]$,
Lemma~\ref{ref35} carries through, and we obtain Corollary~\ref{ref4}(5). The realization $(I_0,I_1)$ is not geometric, and considerations as above hold for $\du{F}_R$.

\begin{figure}[ht]
\raisebox{1.5cm}{
\begin{tikzpicture}[scale=0.9]
\node (0) at (0,0)  []  {$0$};
\node (1)  at (2,0) []  {$1$};
\path
(0) edge[Nedge,out=110,in=160,loop] node[left] {$q=nl$}
(0) edge[Nedge,out=-160,in=-110,looseness=8,loop] node[left] {$o=n^2$}
(0) edge[Nedge,out=-30,in=-150] node[below,inner sep=1pt] {$r=l$} (1)
(1) edge[Nedge,out=-70,in=-20,loop] node[right] {$p=l^2$}
(1) edge[Nedge,out=20,in=70,looseness=8,loop] node[right] {$t=nf$}
(1) edge[Nedge,out=150,in=30] node[above,inner sep=1pt] {$s=lnf$} (0);
\end{tikzpicture}
}
\quad
\includegraphics[width=6cm]{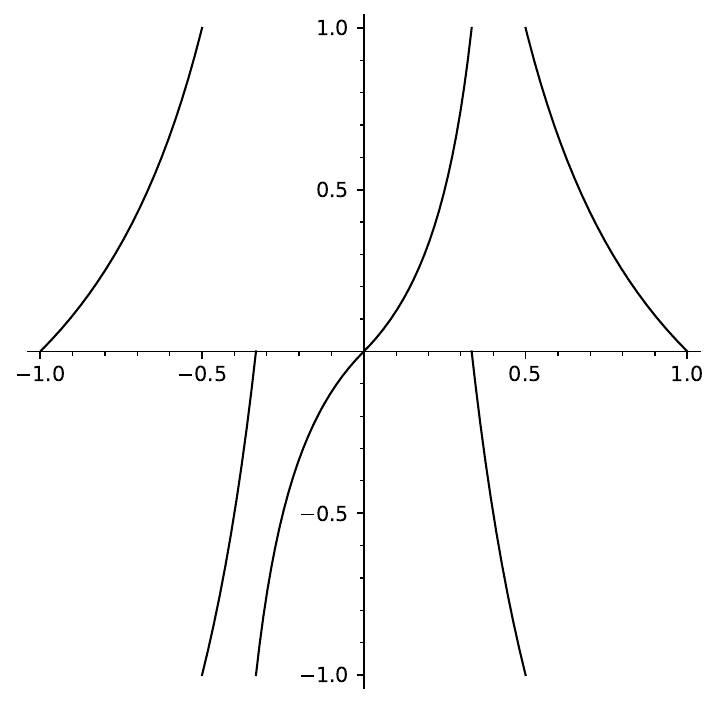}
\caption{The graph $\Gcal_\Acal$ and the slow Odd map.
The branches, listed in the order they touch the $x$-axis, are $B_r\m,B_q\m,B_o\m,B_p\m,B_s\m,B_t\m$}
\label{fig10}
\end{figure}

\begin{figure}[ht]
\includegraphics[width=5.5cm]{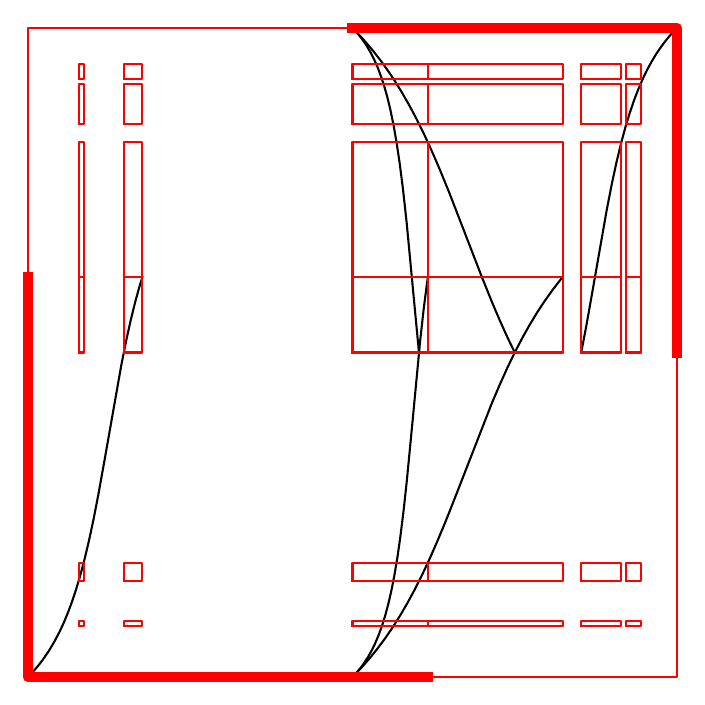}
\hspace{0.6cm}
\includegraphics[width=5.5cm]{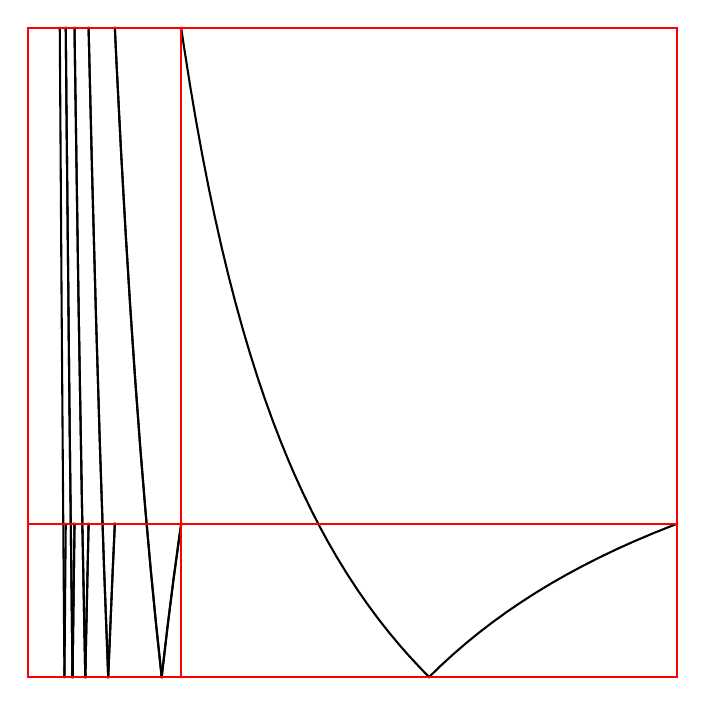}
\caption{Left: the analogue of Figure~\ref{fig4} for the slow Odd map. The increasing branches in the left picture are 
$D_o\m,D_q\m,D_r\m,D_p\m$, and the decreasing ones 
$D_s\m,D_t\m$, in both cases listed from left to right.
Right: the first-return map $\du{F}_R$, grid at $\set{\infty,-\tau-1,-\tau+1}$}
\label{fig11}
\end{figure}
\end{example}

\bibliography{bibliography}

\end{document}